\newtheorem{theorem}{Theorem}
\newtheorem{conj}{Conjecture}
\newtheorem{lemma}{Lemma}
\theoremstyle{definition}
\theoremstyle{remark}
\newtheorem{rem}{Remark}
\numberwithin{equation}{section}
\numberwithin{theorem}{section}
\numberwithin{lemma}{section}
\numberwithin{defn}{section}
\numberwithin{corollary}{section}
\DeclareMathOperator{\spt}{spt}
\begin{document}
\title[The smallest parts function associated with $\omega(q)$]
 {The smallest parts function associated with $\omega(q)$}

\author{Liuquan Wang and Yifan Yang}
\address{School of Mathematics and Statistics, Wuhan University, Wuhan 430072, Hubei, People's Republic of China}
\address{Mathematics Division, National Center for Theoretical Sciences, Taipei 10617, Taiwan}
\email{wanglq@whu.edu.cn;mathlqwang@163.com}

\address{Department of Mathematics, National Taiwan University, Taipei 10617, Taiwan}
\email{yangyifan@ntu.edu.tw}

\subjclass[2010]{Primary 11F33, 11P83; Secondary 05A17,	11F11, 11F37}

\keywords{partitions; mock theta function; smallest parts functions; congruences modulo powers of 5; Eisenstein series}

\begin{abstract}
We establish two families of congruences modulo powers of 5 for the Fourier coefficients of $(2E_2(2\tau)-E_2(\tau))\eta(2\tau)^{-1}$, where $E_2(\tau)$ is the weight 2 Eisenstein series and $\eta(\tau)$ is the Dedekind eta function. This allows us to prove similar congruences for two smallest parts functions. The first function is $\spt_{\omega}(n)$, which was introduced by Andrews, Dixit and Yee and associated with Ramanujan/Watson's third order mock theta function $\omega(q)$. The second one is $\spt_{C5}(n)$, which appeared in the work of Garvan and Jennings-Shaffer. Moreover, we confirm two conjectural congruences of Wang.
\end{abstract}

\maketitle
\section{Introduction}
A partition of a positive integer $n$ is a way of writing it as a sum of positive integers in non-increasing order. The number of partitions of $n$ is usually denoted as $p(n)$ and we agree that $p(0)=1$. The generating function of $p(n)$ can be written as
\begin{align}\label{p(n)-gen}
\sum_{n=0}^\infty p(n)q^n=\frac{1}{(q;q)_\infty}.
\end{align}
Here and throughout this paper, we assume that $|q|<1$ and adopt the $q$ series notation:
\begin{align}
&(a;q)_\infty=\prod\limits_{n=0}^\infty (1-aq^n),  \\
&(a_1,a_2, \dots, a_m;q)_\infty=\prod\limits_{k=1}^{m}(a_k;q)_\infty,
\end{align}
and for $n\in \mathbb{Z}$,
\begin{align}
(a;q)_n=\frac{(a;q)_\infty}{(aq^n;q)_\infty}.
\end{align}
Ramanujan discovered that $p(n)$ satisfies some nice congruences modulo powers of 5, 7 and 11. One of them is for $k\geq 1$ and $n\geq 0$,
\begin{align}\label{p(n)-5-power}
p(5^kn+\delta_k)\equiv 0 \pmod{5^k},
\end{align}
where $\delta_k$ is the least nonnegative integer such that $24\delta_k\equiv 1$ (mod $5^k$).

Around 2008, Andrews \cite{Andrews-spt} introduced the smallest parts function $\spt (n)$, which counts the total number of appearances of the smallest parts in all partitions of $n$. He showed that the generating function of $\spt (n)$ satisfies
\begin{align}
\sum_{n=0}^\infty \spt (n)q^n=\frac{1}{(q;q)_\infty} \left( \sum_{n=1}^\infty \frac{nq^n}{1-q^n}+\sum_{n=1}^\infty \frac{(-1)^nq^{n(3n+1)/2}(1+q^n)}{(1-q^n)^2} \right). \label{spt-gen}
\end{align}
Andrews also proved some congruences modulo 5, 7 and 13 for $\spt(n)$ such as
\begin{align}
\spt (5n+4) &\equiv 0 \pmod{5}. \label{spt-mod5}
\end{align}
After that, this function and some analogous functions have been investigated in many works. See the recent survey of Chen \cite{Chen} for more related works.

In 2012, employing some classical modular equations, Garvan \cite{Garvan-TAMS} successfully  derived congruences for $\spt(n)$ modulo powers of 5, 7 and 13.  For instance, he proved that for $k\geq 3$,
\begin{align}
\spt (5^k n +\delta_k)+ 5\spt (5^{k-2}n+\delta_{k-2}) \equiv 0 \pmod{5^{2k-3}}. \label{spt-5-power-original}
\end{align}
This together with \eqref{spt-mod5} implies that for $k\geq 1$,
\begin{align}
\spt (5^kn+\delta_k) \equiv 0 \pmod{5^{\lfloor \frac{k+1}{2}\rfloor}}. \label{spt-5-power}
\end{align}

In 2015, Andrews, Dixit and Yee  \cite{ADY} defined some new partition functions associated with Ramanujan/Watson mock theta functions. In particular, for the third order mock theta function
\begin{align}
\omega(q):=&\sum\limits_{n=0}^{\infty}\frac{q^{2n^2+2n}}{(q;q^2)_{n+1}^{2}},
\end{align}
they gave a combinatorial interpretation to its coefficients. Let $p_{\omega}(n)$ be the number of partitions of $n$ in which all odd parts are less than twice the smallest part. It follows from definition that
\begin{align}\label{p-omega-gen}
\sum\limits_{n=1}^{\infty}p_{\omega}(n)q^n=\sum\limits_{n=1}^{\infty}\frac{q^n}{(1-q^n)(q^{n+1};q)_{n}(q^{2n+2};q^2)_{\infty}}.
\end{align}
Andrews et al.\ \cite{ADY} showed that
\begin{equation}\label{p-omega-2nd}
\sum\limits_{n=1}^{\infty}p_{\omega}(n)q^n=q\omega(q).
\end{equation}
They then considered the smallest parts function associated to $p_{\omega}(n)$. Let $\spt_{\omega}(n)$ count the number of smallest parts in the partitions enumerated by $p_{\omega}(n)$. The generating function for $\spt_{\omega}(n)$ is
\begin{align}
\sum\limits_{n=1}^{\infty}\spt_{\omega}(n)q^n=\sum\limits_{n=1}^{\infty}\frac{q^n}{(1-q^n)^2(q^{n+1};q)_{n}(q^{2n+2};q^2)_{\infty}}. \label{spt-omega-gen}
\end{align}
An alternative representation was also given \cite[Lemma 6.1]{ADY}:
\begin{align}
\sum\limits_{n=1}^{\infty}\spt_{\omega}(n)q^n=\frac{1}{(q^2;q^2)_{\infty}}\sum\limits_{n=1}^{\infty}\frac{nq^n}{1-q^n}
+\frac{1}{(q^2;q^2)_{\infty}}\sum\limits_{n=1}^{\infty}\frac{(-1)^n(1+q^{2n})q^{n(3n+1)}}{(1-q^{2n})^2}.  \label{spt-omega-new-gen}
\end{align}
Based on this formula, they proved several interesting congruences such as
\begin{align}
\spt_{\omega}(5n+3) &\equiv 0 \pmod{5}, \label{spt-omega-mod5-a}\\
\spt_{\omega}(10n+7) &\equiv 0 \pmod{5}, \label{spt-omega-mod5-b}\\
\spt_{\omega}(10n+9) &\equiv 0 \pmod{5}. \label{spt-omega-mod5-c}
\end{align}

It is worthy mention that the generating function \eqref{spt-omega-gen} also appears in the work of Garvan and Jennings-Shaffer
 \cite[p.\ 612]{Garvan-Shaffer}. In that paper, Garvan and Jennings-Shaffer introduced some $\spt$-type functions corresponding to the Bailey pairs $A1$, $A3$, $A5$, $A7$, $C1$, $C5$, $E2$, and $E4$ in Slater's work \cite{Slater}.
In particular, they defined $\spt_{C1}(n)$ by \eqref{spt-omega-gen}, which means
\begin{align}\label{add-spt-omega-C1}
\spt_{\omega}(n)=\spt_{C1}(n).
\end{align}
A combinatorial interpretation to $\spt_{C1}(n)$ was also given in \cite{Garvan-Shaffer}, which agrees with the combinatorial meaning of $\spt_{\omega}(n)$.

Furthermore, Garvan and Jennings-Shaffer \cite{Garvan-Shaffer} also defined the $\spt$-type function $\spt_{C5}(n)$ by
\begin{align}
\sum_{n=1}^\infty \spt_{C5}(n)q^n=\sum_{n=1}^\infty \frac{q^{(n^2+n)/2}}{(1-q^n)^2(q^{n+1};q)_n(q^{2n+2};q^2)_\infty}. \label{spt-C5-gen}
\end{align}
Interestingly,  this function is closely related to $\spt(n)$ and $\spt_{C1}(n)$ in the following way \cite[Corollary 2.10]{Garvan-Shaffer}:
\begin{align}
\spt(n/2)=\spt_{C1}(n)-\spt_{C5}(n), \label{relation}
\end{align}
where $\spt(n/2)$ is zero if $n$ is odd. By establishing some equalities between some crank-type functions associated with $\spt_{C1}(n)$ and $\spt_{C5}(n)$, it was proved in \cite{Garvan-Shaffer} that
\begin{align}
\spt_{C1}(5n+3) &\equiv 0 \pmod{5}, \label{C1-mod5} \\
\spt_{C5}(5n+3) &\equiv 0 \pmod{5}. \label{C5-mod5}
\end{align}
Congruence \eqref{C1-mod5} is the same as \eqref{spt-omega-mod5-a} but the proofs given in \cite{ADY} and \cite{Garvan-Shaffer} are different.

Recently,  among other results, Wang \cite{Wang-JNT} discovered some new congruences for $p_\omega(n)$. Moreover, he pointed out that as a natural consequence of \eqref{spt-omega-new-gen}, we have
\begin{align}
\sum\limits_{n=0}^{\infty}\spt_{\omega}(2n+1)q^n=\frac{(q^2;q^2)_{\infty}^8}{(q;q)_{\infty}^5}.
\end{align}
Wang then proposed the following conjecture.
\begin{conj}\label{Wang-conj}
(Cf.\ \cite[Conjecture 4.1]{Wang-JNT}) For any integers $k\ge 1$ and $n\ge 0$, we have
\begin{align}
\spt_{\omega}\Big(2\cdot 5^{2k-1}n+\frac{7\cdot 5^{2k-1}+1}{12} \Big) &\equiv 0 \pmod{5^{2k-1}},\label{mod-5-odd}\\
\spt_{\omega}\Big(2 \cdot 5^{2k}n+\frac{11\cdot 5^{2k}+1}{12} \Big)  &\equiv 0 \pmod{5^{2k}}. \label{mod-5-even}
\end{align}
\end{conj}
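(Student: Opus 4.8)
The plan is to reduce Conjecture~\ref{Wang-conj} to a single divisibility statement for the coefficients of an eta quotient, and then to attack it by Watson's method for Ramanujan's partition congruences, adapted to level $10$. Using Wang's identity quoted above, write
\[
\sum_{n=0}^{\infty}\spt_{\omega}(2n+1)q^n=\frac{(q^2;q^2)_{\infty}^8}{(q;q)_{\infty}^5}=:\sum_{n=0}^{\infty}a(n)q^n .
\]
A direct computation shows that the odd family \eqref{mod-5-odd} is the progression $n=5^{2k-1}m+\tfrac{7\cdot 5^{2k-1}-11}{24}$, for which $24n+11=5^{2k-1}(24m+7)$, while the even family \eqref{mod-5-even} is $n=5^{2k}m+\tfrac{11(5^{2k}-1)}{24}$, for which $24n+11=5^{2k}(24m+11)$. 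Since $\gcd(24,5)=1$, the condition $5^{r}\mid 24n+11$ pins down $n$ uniquely modulo $5^{r}$, and these two residues are exactly the offsets above. Hence the entire conjecture is equivalent to the assertion
\[
24n+11\equiv 0 \pmod{5^{r}}\ \Longrightarrow\ a(n)\equiv 0 \pmod{5^{r}}\qquad (r\ge 1),
\]
the precise analogue of Ramanujan's $p(n)\equiv 0\pmod{5^{r}}$ for $24n\equiv 1\pmod{5^{r}}$, with $-1$ replaced by $11$, which is $24$ times the order $\tfrac{11}{24}$ of vanishing of $\eta(2\tau)^8\eta(\tau)^{-5}$ at $\infty$. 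Because \eqref{relation} gives $\spt_{C5}(2n+1)=\spt_{\omega}(2n+1)$ (the term $\spt(n/2)$ vanishes at odd arguments), the corresponding statements for $\spt_{C5}$ follow at once; the weight-two Eisenstein series $2E_2(2\tau)-E_2(\tau)$ enters through the first sum in \eqref{spt-omega-new-gen}, so the same analysis proves the congruences for the coefficients of $(2E_2(2\tau)-E_2(\tau))\eta(2\tau)^{-1}$.

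Next I would set up the modular machinery. Multiplying by $q^{11/24}$ turns the generating series into $\eta(2\tau)^8\eta(\tau)^{-5}$, a weakly holomorphic modular form of weight $3/2$ on $\Gamma_0(2)$ carrying the induced eta multiplier. The congruence progressions are extracted by the Atkin operator $U_5$, and the key structural fact is that $U_5$ preserves a finite-dimensional space of such forms living over the genus-zero modular curve $X_0(10)$: concretely one fixes a Hauptmodul $t$ (an eta quotient in $\eta(\tau),\eta(2\tau),\eta(5\tau),\eta(10\tau)$) and expresses each $U_5$-image as a Laurent polynomial in $t$ with integer coefficients. The engine of the whole computation is the degree-$5$ modular equation relating $t(\tau)$ and $t(5\tau)$, equivalently the explicit $5$-dissection of $\eta(2\tau)^8\eta(\tau)^{-5}$ in the spirit of the Ramanujan $5$-dissections used by Garvan in \cite{Garvan-TAMS}.

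With this in place I would run a Watson--Garvan induction. One defines two interleaved sequences of modular functions $f_{2k-1}$ and $f_{2k}$, obtained from $\eta(2\tau)^8\eta(\tau)^{-5}$ by alternately applying $U_5$ with the shifts dictated by the residues $7$ and $11$ found above, and proves a recursion of the shape $f_{r+1}=U_5\!\left(\phi\, f_{r}\right)$ for an explicit modular function $\phi$ coming from the modular equation. The substance of the induction is that each application of $U_5$ multiplies the $5$-adic content of the coefficients by exactly $5$; tracking this along the two-step recursion shows that the coefficients of $f_{r}$ are $5$-integral and divisible by $5^{r}$, which is precisely $a(n)\equiv 0\pmod{5^{r}}$ on the relevant progression. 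The period-two split into the two families is the reflection of the period-two behaviour of the modular equation (and of the two governing cusps) under $U_5$.

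The main obstacle is twofold. First, because the form has \emph{half-integral} weight $3/2$, the action of $U_5$ does not drop directly into the classical integral-weight $X_0(5)$ picture used for $p(n)$; one must carry the eta multiplier through every $U_5$ computation and verify that the transforms remain Laurent polynomials in a single Hauptmodul. Second, and more delicate, is proving the \emph{exact} valuation growth: one must show the modular equation forces a factor of precisely $5$ at each step, not merely eventual divisibility, and this requires explicit $5$-dissection identities for $\eta(2\tau)^8\eta(\tau)^{-5}$ together with a careful analysis of the bounded-order leading terms that seed the induction. Once these structural facts are in hand, both families of Conjecture~\ref{Wang-conj} follow simultaneously from the single divisibility statement above.
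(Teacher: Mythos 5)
Your opening reduction is correct and is, in essence, the same framing the paper uses. Writing $\sum_{n\ge 0}\spt_{\omega}(2n+1)q^n=(q^2;q^2)_\infty^8/(q;q)_\infty^5=\sum a(n)q^n$, your two progressions are exactly the unique residues with $24n+11\equiv 0\pmod{5^r}$, and the conjecture is equivalent to the implication $5^r\mid 24n+11\Rightarrow 5^r\mid a(n)$. The paper reaches the same point through $\spt_{\omega}(2n+1)=\spt_{C5}(2n+1)$ (from \eqref{add-spt-omega-C1} and \eqref{relation}) together with $24\spt_{C5}(n)=c(n)+(12n-1)p(n/2)$, or equivalently through the identity $c(2n+1)=24\spt_{\omega}(2n+1)$ noted in its closing remark; since $\gcd(24,5)=1$, your statement about $a(n)$ is precisely the odd-argument case of Theorem \ref{thm-c(n)}. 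The machinery you then invoke --- $U_5$, a degree-$5$ modular equation, the genus-zero level-$10$ curve, a Watson--Garvan induction on $5$-adic valuations with period-two alternation --- is exactly the paper's machinery, so this is not a genuinely different route.

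The genuine gap is that everything after the reduction is a program rather than a proof: the two items you yourself flag as ``the main obstacles'' constitute essentially the entire content of the paper's argument, and nothing in your proposal discharges them. Concretely, the paper's proof rests on (i) Lemma \ref{lem-modular}, the explicit quintic $X^5+\sum_{j=0}^{4}a_j(t)X^j=\prod_{\lambda}(X-t_\lambda)$ whose coefficients carry explicit powers of $5$, and the resulting recurrence of Lemma \ref{lem-key}; (ii) the explicit $U_5$-images of the seed functions $Ft^i$, $F\rho t^i$, $FZt^i$, $FZ\rho t^i$ for $-4\le i\le 0$ (Appendix, Groups I--IV), which require \emph{two} generators $t,\rho$ related by Lemma \ref{lem-sigma-t} --- not Laurent polynomials in a single hauptmodul --- plus the auxiliary function $Z=\eta(50\tau)/\eta(2\tau)$ inserted at every other step to produce the period-two alternation; and (iii) the valuation bookkeeping of Lemmas \ref{lem-common}, \ref{lem-mxyz-ord} and \ref{lem-ab-order}, where the statement that actually closes the induction is not ``each $U_5$ gains exactly a factor $5$'' but a two-parameter array of lower bounds of the shape $\pi(g(i,j))\ge\lfloor (5j-i+\gamma)/3\rfloor$ stable under the recurrence; the single power of $5$ per step is only what survives after minimizing over $j$. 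Without (i)--(iii), or their analogues for your weight-$3/2$ anchor $\eta(2\tau)^8\eta(\tau)^{-5}$ (whose multiplier bookkeeping the paper deliberately avoids by anchoring every $U_5$-image to the weight-$2$ form $F$, so that all objects are $F$ times modular functions on $\Gamma_0(10)$), the assertion that the induction closes is unsupported. Finally, your claim that ``the same analysis proves the congruences for the coefficients of $(2E_2(2\tau)-E_2(\tau))\eta(2\tau)^{-1}$'' overreaches: congruences for $a(n)$ control $c$ only at odd arguments, whereas Theorem \ref{thm-c(n)} holds at all arguments in the stated progressions.
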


The main goal of this paper is to confirm the above conjecture. Toward this we note that the relations \eqref{add-spt-omega-C1} and \eqref{relation} imply
\begin{align}
\spt_{\omega}(2n+1)=\spt_{C5}(2n+1). \label{omega-C5-odd}
\end{align}
Therefore, it suffices to establish suitable congruences modulo powers of 5 for $\spt_{C5}(n)$.

Recall the weight 2 Eisenstein series
\begin{align}
E_2(\tau):=1-24\sum_{n=1}^\infty \frac{nq^n}{1-q^n}, \quad q=e^{2\pi i \tau}, \quad \mathrm{Im}\tau>0.
\end{align}
Let the sequence $c(n)$ be defined by
\begin{align}
\sum_{n=0}^\infty c(n)q^n:=\frac{2E_2(2\tau)-E_2(\tau)}{(q^2;q^2)_\infty}.
\end{align}
We find that (see \eqref{C5-relation-proved})
\begin{align}
\spt_{C5}(n)=\frac{1}{24}\left(c(n)-p(n/2)+12np(n/2) \right), \label{C5-relation}
\end{align}
where as before, we agree that $p(n/2)=0$ if $n$ is an odd number. Since congruences modulo powers of 5 for $p(n)$ are already known, we can get 5-adic properties of $\spt_{C5}(n)$ from those of $c(n)$.

Using the theory of modular forms, we establish the following congruences for $c(n)$.
\begin{theorem}\label{thm-c(n)}
For $k\geq 1$ and $n\geq 0$, we have
\begin{align}
c\left(5^{2k-1}n+\frac{7\cdot 5^{2k-1}+1}{12} \right) &\equiv 0 \pmod{5^{2k-1}}, \label{c(n)-cong-1} \\
c\left(5^{2k}n+\frac{11\cdot 5^{2k}+1}{12} \right)  &\equiv 0 \pmod{5^{2k}}. \label{c(n)-cong-2}
\end{align}
\end{theorem}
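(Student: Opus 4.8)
The plan is to recognize $\sum_{n\ge 0}c(n)q^n$ as (essentially) a weakly holomorphic modular form and to extract the stated arithmetic progressions by iterating the operator $U_5$, controlling the $5$-adic size of the resulting forms at each step. Write $\Phi(\tau)=2E_2(2\tau)-E_2(\tau)$; since $M_2(\Gamma_0(2))$ is one-dimensional, $\Phi$ is a genuine holomorphic modular form of weight $2$ on $\Gamma_0(2)$ with constant term $1$. Using $(q^2;q^2)_\infty=q^{-1/12}\eta(2\tau)$ we have
\begin{align*}
\sum_{n\ge 0}c(n)q^n=q^{1/12}\,\Phi(\tau)\,\eta(2\tau)^{-1},
\end{align*}
a weakly holomorphic form of weight $3/2$. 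The two residue classes are governed by $12r\equiv 1\pmod{5^m}$ with $m=2k-1$ and $m=2k$ respectively (indeed $12\cdot\frac{7\cdot 5^m+1}{12}=7\cdot 5^m+1\equiv1$, and likewise with $11$), which is precisely the centering dictated by the $q^{-1/12}$ coming from $\eta(2\tau)^{-1}$; this is the exact analogue of the condition $24\delta_k\equiv1\pmod{5^k}$ underlying Ramanujan's congruence \eqref{p(n)-5-power} for $p(n)$. In particular, $12r_m\equiv1\pmod{5^m}$ forces $r_{m+1}\equiv r_m\pmod{5^m}$, so passing from the progression $5^mn+r_m$ to $5^{m+1}n+r_{m+1}$ amounts to a single application of $U_5$ preceded by a shift.

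First I would pass to level $10$, where $U_5$ preserves the relevant space of modular forms (as $5\mid 10$), and remove the half-integer weight by forming eta-quotient ratios. The curve $X_0(10)$ has genus $0$, so its function field is generated by a single Hauptmodul $t$, and one can write the weight-$3/2$ object above as a fixed modular factor times a power series in $t$. Applying $U_5$ and selecting the residue class for $m=1$ then yields $\sum_{n}c(5n+3)q^{n}$ up to that fixed factor; moreover, bounding orders of poles at the cusps guarantees that all iterated $U_5$-images remain inside one fixed finite-dimensional space, i.e.\ a finitely generated $\mathbb{Z}_{(5)}$-module of the form (fixed factor)$\cdot\,\mathbb{Z}_{(5)}[t]_{\le D}$ for a fixed degree bound $D$.

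The core of the argument is then a $5$-adic recursion. I would compute the action of $U_5$ on the chosen basis of powers of $t$, record it as an explicit transition matrix, and prove by induction that each extraction of the $m$-th progression gains one factor of $5$: writing $L_m$ for the normalized generating series of $c(5^mn+r_m)$ expressed in $t$, one shows $L_{m+1}=U_5\!\left(q^{-a_m}L_m\right)$ and that this operation contributes a factor $5$, so that $5^m\mid L_m$. Separating according to the parity of $m$ then gives exactly the two families \eqref{c(n)-cong-1} and \eqref{c(n)-cong-2}, with the offset between the numerators $7$ and $11$ reflecting the period-two behaviour of the $U_5$-action modulo $5$. The base cases $c(5n+3)\equiv0\pmod5$ and $c(25n+23)\equiv0\pmod{25}$ would be checked directly from the $q$-expansions.

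The hard part will be the $5$-adic bookkeeping inside this recursion: one must show that the transition matrix of $U_5$ on the Hauptmodul basis has entries of exactly the $5$-adic valuations needed to secure a gain of one \emph{full} power of $5$ at every step, uniformly in $m$, rather than a weaker bound that eventually saturates. This is the step that decides whether the modulus truly grows like $5^m$, and it requires both a judicious choice of Hauptmodul and basis and enough explicit computation of $U_5$ on low powers of $t$ to anchor the induction. A secondary technical point is managing the eta-multiplier and half-integer weight cleanly, so that the fixed modular factor is genuinely inert under progression-extraction and does not itself inject spurious $5$-divisibility that would distort the count.
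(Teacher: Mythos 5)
Your overall route is in fact the paper's own: pass to the genus-zero curve $X_0(10)$, write the relevant generating functions as a fixed weight-$2$ factor times polynomials in a hauptmodul, compute the action of $U_5$ on that basis explicitly, and run a $5$-adic induction in which each application of $U_5$ gains one power of $5$. But as written your plan has concrete defects, two of which would make the induction fail. First, the claim that all iterated $U_5$-images stay in a \emph{fixed} finite-dimensional space $F\cdot\mathbb{Z}_{(5)}[t]_{\le D}$ is false: by the degree-five modular equation satisfied by $t$ (Lemma \ref{lem-modular}, where $a_4(t)$ has degree $5$ in $t$), the $t$-degree of $U_5(Ft^i)$ grows with $i$, and the $t$-degrees of the iterates grow without bound. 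What actually tames the growing tails is not a degree bound but the valuation estimates of Lemma \ref{lem-mxyz-ord}: the coefficient of $t^j$ in each transition carries $5$-adic order at least roughly $\left\lfloor (5j-i)/3\right\rfloor$, so high powers of $t$ are invisible modulo any fixed power of $5$. For the same reason your inductive hypothesis ``$5^m$ divides $L_m$'' is too weak to propagate; one needs two-parameter bounds of the shape $\pi(a(2i-1,j))\ge 2i-1+\left\lfloor (5j-5)/3\right\rfloor$ (Lemma \ref{lem-ab-order}), in which the gain per step and the gain in $j$ are proved simultaneously.

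Second, your recentering $L_{m+1}=U_5\left(q^{-a_m}L_m\right)$ is not a modular operation: multiplying by a bare power of $q$ leaves the function field, so the transition-matrix formalism cannot be applied to it. The paper's replacement is multiplication by the eta-quotient $Z=\eta(50\tau)/\eta(2\tau)$ before every second application of $U_5$, which shuttles between levels $10$ and $50$ and is exactly what produces the alternation $7\leftrightarrow 11$ you observed. This has a structural consequence your sketch misses: after the $Z$-twist the images $U_5(FZt^i)$ and $U_5(FZ\rho t^i)$ are no longer $F$ times polynomials in $t$ alone but involve the second generator $\rho$ (Groups III and IV of the Appendix), so the ambient object must be the rank-two module $F\cdot\left(\mathbb{Z}[t]\oplus\rho\,\mathbb{Z}[t]\right)$; a single power series in $t$ does not close under the iteration, and the $\rho$-components obey their own, different, valuation bounds \eqref{x1-ord}, \eqref{y1-ord}, \eqref{z1-ord}. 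Finally, the step you yourself flag as the hard part --- the exact $5$-adic valuations of the transition coefficients --- is the entire engine of the proof: it is supplied by the explicit Appendix data for $-4\le i\le 0$ together with the recurrence \eqref{recurrence} and the propagation Lemma \ref{lem-common}, and nothing short of those explicit bounds guarantees a full power of $5$ per step uniformly in $m$. So while your strategy is the right one, the proposal as it stands omits the decisive estimates and asserts two structural claims that are false and would have to be repaired along the paper's lines.
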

With this theorem, we get congruences modulo powers of 5 for $\spt_{C5}(n)$.
\begin{theorem}\label{thm-C5}
For $k\geq 1$ and $n\geq 0$, we have
\begin{align}
\spt_{C5}\left( 5^{2k-1}n+\frac{7\cdot 5^{2k-1}+1}{12} \right) & \equiv 0 \pmod{5^{2k-1}}, \label{C5-cong-power-1} \\
\spt_{C5}\left( 5^{2k}n+\frac{11 \cdot 5^{2k}+1}{12} \right) & \equiv 0 \pmod{5^{2k}}. \label{C5-cong-power-2}
\end{align}
\end{theorem}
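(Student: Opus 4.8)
The plan is to combine the just-stated Theorem~\ref{thm-c(n)} with the arithmetic relation \eqref{C5-relation} and Ramanujan's classical congruence \eqref{p(n)-5-power} for $p(n)$. Writing $N$ for either of the two progressions $5^{2k-1}n+\frac{7\cdot 5^{2k-1}+1}{12}$ or $5^{2k}n+\frac{11\cdot 5^{2k}+1}{12}$, formula \eqref{C5-relation} can be rearranged as
\begin{align*}
24\,\spt_{C5}(N)=c(N)+(12N-1)\,p(N/2),
\end{align*}
where $p(N/2)=0$ when $N$ is odd. Since $24$ is a unit modulo every power of $5$, it suffices to prove that the right-hand side is divisible by the relevant power of $5$, and Theorem~\ref{thm-c(n)} already disposes of the term $c(N)$. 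Thus everything reduces to controlling the $5$-adic valuation of $p(N/2)$ in the case that $N$ is even.

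The key observation is that halving $N$ lands us exactly in the residue class where Ramanujan's congruence is sharpest. Indeed, for the first progression one computes
\begin{align*}
24\cdot\frac{N}{2}=12N=5^{2k-1}(12n+7)+1\equiv 1 \pmod{5^{2k-1}},
\end{align*}
so that $N/2\equiv \delta_{2k-1}\pmod{5^{2k-1}}$, where $\delta_{j}$ is the least nonnegative solution of $24\delta_j\equiv 1\pmod{5^{j}}$. Because $\delta_{2k-1}$ is the minimal nonnegative representative of its class, we may write $N/2=5^{2k-1}m+\delta_{2k-1}$ with $m\ge 0$, and \eqref{p(n)-5-power} yields $p(N/2)\equiv 0\pmod{5^{2k-1}}$. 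An identical computation for the second progression gives $12N=5^{2k}(12n+11)+1$, whence $N/2\equiv\delta_{2k}\pmod{5^{2k}}$ and $p(N/2)\equiv 0\pmod{5^{2k}}$.

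Putting these together, in the even case the term $(12N-1)p(N/2)$ is divisible by $5^{2k-1}$ (respectively $5^{2k}$) purely on account of the factor $p(N/2)$, while in the odd case this term vanishes outright; in either case the congruence for $c(N)$ from Theorem~\ref{thm-c(n)} finishes the argument after dividing by the unit $24$. One should also record the elementary checks that $\frac{7\cdot 5^{2k-1}+1}{12}$ and $\frac{11\cdot 5^{2k}+1}{12}$ are integers (using $5^{2}\equiv 1\pmod{12}$) so that the progressions are well defined. I do not anticipate a genuine obstacle here: all of the analytic difficulty has been absorbed into Theorem~\ref{thm-c(n)}, and the only point requiring care is the bookkeeping that matches the halved argument $N/2$ to the exact class $\delta_{2k-1}$ or $\delta_{2k}$, so that the full strength of \eqref{p(n)-5-power} can be invoked.
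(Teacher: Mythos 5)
Your proposal is correct and follows essentially the same route as the paper: the paper's proof likewise reduces Theorem~\ref{thm-C5} to Theorem~\ref{thm-c(n)} via the identity $24\spt_{C5}(n)=c(n)+(12n-1)p(n/2)$, with Ramanujan's congruence \eqref{p(n)-5-power} handling the $p(N/2)$ term. The only difference is one of emphasis: the paper spends its proof deriving that identity (your citation of \eqref{C5-relation}, which the paper states in the introduction but only proves at this point, as \eqref{C5-relation-proved}) and leaves implicit the bookkeeping $N/2\equiv\delta_{2k-1}$ or $\delta_{2k}$, which you correctly spell out.
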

As a consequence, from \eqref{spt-5-power}, \eqref{omega-C5-odd}, \eqref{C5-cong-power-1} and \eqref{C5-cong-power-2} we get
\begin{theorem}\label{thm-conj}
Conjecture \ref{Wang-conj} is true. Moreover, for $k\geq 1$ and $n\geq 0$ we have
\begin{align}\label{spt-omega-even}
\spt_{\omega}(2(5^kn+\delta_k))\equiv 0 \pmod{5^{\left\lfloor\frac{k+1}{2}\right\rfloor}}.
\end{align}
\end{theorem}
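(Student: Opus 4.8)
The plan is to deduce the final theorem purely from the already-established Theorem~\ref{thm-C5}, together with the elementary relations \eqref{spt-5-power}, \eqref{add-spt-omega-C1}, \eqref{relation} and \eqref{omega-C5-odd}; no further modular-forms input is required, only a careful bookkeeping of residue classes and parities modulo powers of $5$.

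First I would settle Conjecture~\ref{Wang-conj}. The key point is that both arguments appearing in \eqref{mod-5-odd} and \eqref{mod-5-even} are \emph{odd}. Writing the two base values as $a_k=\tfrac{7\cdot 5^{2k-1}+1}{12}$ and $b_k=\tfrac{11\cdot 5^{2k}+1}{12}$, one checks from $5^{2k-1}\equiv 5$ and $5^{2k}\equiv 1\pmod{24}$ that $12a_k=7\cdot 5^{2k-1}+1\equiv 12$ and $12b_k=11\cdot 5^{2k}+1\equiv 12\pmod{24}$, whence $a_k$ and $b_k$ are odd integers. Consequently $2\cdot 5^{2k-1}n+a_k=5^{2k-1}(2n)+a_k$ and $2\cdot 5^{2k}n+b_k=5^{2k}(2n)+b_k$ are odd, so by \eqref{omega-C5-odd} the function $\spt_{\omega}$ agrees with $\spt_{C5}$ on them. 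Applying Theorem~\ref{thm-C5} with its free variable taken to be the even number $2n$ then produces exactly the two congruences of the conjecture.

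For the additional statement \eqref{spt-omega-even} I would combine \eqref{add-spt-omega-C1} with \eqref{relation} to obtain, for an even argument $m$, the identity $\spt_{\omega}(m)=\spt(m/2)+\spt_{C5}(m)$. Taking $m=2(5^kn+\delta_k)$ gives
\begin{align}
\spt_{\omega}\big(2(5^kn+\delta_k)\big)=\spt(5^kn+\delta_k)+\spt_{C5}\big(2(5^kn+\delta_k)\big). \notag
\end{align}
The first summand is divisible by $5^{\lfloor(k+1)/2\rfloor}$ by \eqref{spt-5-power}. For the second summand I would prove the \emph{stronger} divisibility by $5^k$: from $24\delta_k\equiv 1\pmod{5^k}$ we get $2\delta_k\equiv 12^{-1}\pmod{5^k}$, while the relevant base value of Theorem~\ref{thm-C5} also reduces to $12^{-1}\pmod{5^k}$, since $12\cdot\tfrac{7\cdot 5^k+1}{12}\equiv 1$ and $12\cdot\tfrac{11\cdot 5^k+1}{12}\equiv 1\pmod{5^k}$. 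Hence the even argument $2\cdot 5^kn+2\delta_k$ lies, according to the parity of $k$, in the progression $5^kN+\tfrac{7\cdot 5^k+1}{12}$ (when $k$ is odd, so that $2j-1=k$) or $5^kN+\tfrac{11\cdot 5^k+1}{12}$ (when $k$ is even, so that $2j=k$). One must also confirm that the index $N$ is a nonnegative integer, which follows because $0\le\delta_k<5^k$ and the base values lie in $(0,5^k)$, forcing $N\in\{0,1,2,\dots\}$. Theorem~\ref{thm-C5} then delivers divisibility by $5^k\ge 5^{\lfloor(k+1)/2\rfloor}$, and summing the two contributions yields \eqref{spt-omega-even}.

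Since the analytic heavy lifting has already been carried out in Theorems~\ref{thm-c(n)} and~\ref{thm-C5}, the present statement is essentially a bookkeeping corollary, and I do not expect any serious obstacle. The only point demanding genuine care is precisely the congruence-class verification: checking that the conjecture's arguments are odd, that $2\delta_k$ matches the two base residues of Theorem~\ref{thm-C5} modulo $5^k$, and that the resulting progression index is nonnegative so that Theorem~\ref{thm-C5} is applicable to every $n\ge 0$.
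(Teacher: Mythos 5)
Your proposal is correct and follows essentially the same route as the paper: both prove the conjecture via $\spt_{\omega}(2n+1)=\spt_{C5}(2n+1)$ together with Theorem~\ref{thm-C5}, and both prove \eqref{spt-omega-even} by splitting $\spt_{\omega}(2n)=\spt(n)+\spt_{C5}(2n)$ and invoking \eqref{spt-5-power} and Theorem~\ref{thm-C5}. The residue-class bookkeeping you carry out explicitly --- the oddness of the base values $a_k,b_k$, the identification of $2\delta_k$ with the base residues modulo $5^k$, and the nonnegativity of the progression index --- is exactly the detail the paper leaves implicit, and your verification of it is sound.
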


The paper is organized as follows. In Section \ref{sec-pre} we establish a modular equation and discuss the action of Atkin's $U$-operator on certain functions.  These are needed for finding representations for the generating functions of the sequences in Theorem \ref{thm-c(n)}. In Section \ref{sec-proof} we shall give proofs to Theorems \ref{thm-c(n)}-\ref{thm-conj}. Some auxiliary formulas are presented in the Appendix. 

\section{Preliminaries}\label{sec-pre}
We denote the upper half complex plane by $\mathbb{H}:=\{\tau \in \mathbb{C}|\mathrm{Im} \tau>0\}$.
The Dedekind eta function is defined as
\begin{align}
\eta(\tau):=q^{1/24}(q;q)_\infty.
\end{align}
The key ingredients in our proof will be four functions: $\rho, t, F$ and $Z$, and we now define them.

We define two functions $\rho=\rho(\tau)$ and $t=t(\tau)$ as follows:
\begin{align}
\rho=\frac{\eta^2(2\tau)\eta^4(5\tau)}{\eta^4(\tau)\eta^2(10\tau)}, \quad t=\frac{\eta^2(5\tau)\eta^2(10\tau)}{\eta^2(\tau)\eta^2(2\tau)}.
\end{align}
The function $\rho$ is a hauptmodul of modular functions on $\Gamma_{0}(10)$. The function $t$ is a modular function on $\Gamma_0(10)$. The orders of $\rho$ and $t$ at cusps $c/d$ ($(c,d)=1$) of $\Gamma_0(10)$ are as follows:
\begin{table}[htbp]
\begin{tabular}{|c|c|c|c|c|}
  \hline
  $d$ & 1 & 2 & 5 & 10 \\
  \hline
  $\mathrm{ord}_{c/d}\rho$ & $-1$ & 0 & 1 & 0 \\
  \hline
   $\mathrm{ord}_{c/d}t$ & $-1$ & $-1$ & 1  & 1 \\
  \hline
\end{tabular}
\end{table}

We have the following relation between $\rho$ and $t$.
\begin{lemma}\label{lem-sigma-t}
We have
\begin{align}
\rho^2=\rho +5\rho t-t.
\end{align}
\end{lemma}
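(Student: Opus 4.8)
The plan is to recognize the claimed relation as an identity between modular functions on $\Gamma_0(10)$ and to verify it via the valence formula. Setting
$$G(\tau):=\rho^2-\rho-5\rho t+t,$$
the goal is to show that $G$ vanishes identically. Since $\rho$ and $t$ are modular functions on $\Gamma_0(10)$, so is $G$, and it therefore suffices to bound the divisor of $G$ and then check finitely many Fourier coefficients.

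First I would read off from the order table the behaviour of $G$ at the four cusps $c/d$ with $d\in\{1,2,5,10\}$. At $d=5$ and $d=10$ every term of $G$ has nonnegative order, so $G$ is holomorphic there. At $d=1$ we have $\mathrm{ord}(\rho^2)=-2$, $\mathrm{ord}(5\rho t)=-2$, and $\mathrm{ord}(\rho)=\mathrm{ord}(t)=-1$, so $G$ has a pole of order at most $2$; at $d=2$ we have $\mathrm{ord}(5\rho t)=\mathrm{ord}(t)=-1$ while the remaining terms are holomorphic, so $G$ has a pole of order at most $1$. Hence the total pole order of $G$ on $X_0(10)$ is at most $3$.

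Next I would apply the valence formula: for a nonzero weight-$0$ modular function $\sum_{P\in X_0(10)}\mathrm{ord}_P(G)=0$, so $G$ has exactly as many zeros as poles. Since the cusp $\infty$ (the cusp with $d=10$) has width $1$, the order of $G$ there equals the order of vanishing of its ordinary $q$-expansion. Thus, if I expand each eta quotient via $\eta(\tau)=q^{1/24}(q;q)_\infty$, compute the $q$-expansion of $G$ at $\infty$, and find that the coefficients of $q^0,q^1,q^2,q^3$ all vanish, then $\mathrm{ord}_\infty(G)\ge 4$ exceeds the total pole order $3$, which is impossible unless $G\equiv 0$. This proves the lemma.

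The computation itself is routine once the bookkeeping is set up: one checks that the $q$-powers of the eta quotients combine so that $\rho$, $t$, $\rho^2$, and $\rho t$ are genuine power series in $q$ with no fractional exponents, and then multiplies out the first several terms. As an alternative route and a sanity check, the relation is equivalent to $t=\rho(\rho-1)/(5\rho-1)$; since $\rho$ is a hauptmodul it attains each value of $\mathbb{P}^1$ exactly once, and one can match divisors directly: $\rho=\infty$ at $d=1$, $\rho=0$ at $d=5$, $\rho=1$ at $\infty$ (since $\rho\to 1$ as $q\to 0$), and $\rho=1/5$ at $d=2$, so that $t$ and $\rho(\rho-1)/(5\rho-1)$ share the divisor $-[d{=}1]-[d{=}2]+[d{=}5]+[\infty]$ and hence agree up to a constant fixed by the leading Fourier coefficient. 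The only point needing care is confirming that the poles at $d=1$ and $d=2$ do not exceed the orders claimed above, equivalently that $\rho$ genuinely takes the value $1/5$ at the cusp $d=2$; this cusp analysis is the sole conceptual obstacle, after which the conclusion is immediate.
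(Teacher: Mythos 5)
Your valence-formula argument is correct: the pole bounds at the cusps with $d=1,2$ follow directly from the order table, all terms of $\rho^2-\rho-5\rho t+t$ are holomorphic at the cusps with $d=5,10$, and the vanishing of the $q$-expansion through $q^3$ (which indeed holds, e.g.\ $\rho=1+4q+12q^2+32q^3+\cdots$ and $t=q+2q^2+7q^3+\cdots$) then forces the difference to vanish identically, since the cusp $\infty$ has width $1$. The paper states this lemma without an explicit proof, but its Appendix prescribes exactly this method---comparing orders of vanishing at cusps and finitely many Fourier coefficients---so your proposal coincides with the paper's intended argument (your divisor-matching sanity check via the hauptmodul $\rho$ is a valid alternative, and you correctly flag that it additionally requires knowing $\rho=1/5$ at the cusp $d=2$, a fact the main argument never needs).
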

\begin{rem}
The function $\rho$ was used by Gordon and Hughes \cite{Gordon-Hughes} in studying partitions into distinct parts. The function $t$  was used by Xiong \cite{Xiong} in the study of cubic partitions.
\end{rem}
We define two more useful functions:
\begin{align}
&Z=Z(\tau):=\frac{\eta(50\tau)}{\eta(2\tau)}, \\
&F=F(\tau):=\frac{1}{24}\left(50E_2(10\tau)-25E_2(5\tau)-2E_2(2\tau)+E_2(\tau) \right).
\end{align}
The function $Z$ is a modular function on $\Gamma_0(50)$, while the function $F$ is a modular form of weight 2 on $\Gamma_0(10)$.

For a function $f:\mathbb{H}\rightarrow \mathbb{C}$ and positive integer $m$, Atkin's $U$-operator $U_m$ is defined by
\begin{align}
U_m(f)(\tau):=\frac{1}{m}\sum_{\lambda=0}^{m-1} f\left(\frac{\tau+\lambda}{m} \right),  \quad \tau \in \mathbb{H}.
\end{align}
From this definition, it is not difficult to see that
\begin{align}
U_m\left(\sum_{n=-\infty}^\infty f_nq^n\right)=\sum_{n=-\infty}^\infty f_{mn}q^n.
\end{align}

We are now going to study the effects of the action $U_5$ to the functions $Ft^i$, $F\rho t^i$, $ FZ t^i$ and $FZ \rho t^i$. For this we need the following lemma.
\begin{lemma}\label{lem-modular}
For $0\leq \lambda \leq 4$ we define
\begin{align}
t_{\lambda}(\tau):=t\left(\frac{\tau+\lambda}{5}\right).
\end{align}
Then in the polynomial ring $\mathbb{C}(t)[X]$ we have
\begin{align}
X^5+\sum_{j=0}^4a_j(t)X^j=\prod\limits_{\lambda=0}^4 (X-t_\lambda), \label{poly}
\end{align}
where
\begin{align*}
&a_0(t)=-t, \quad a_1(t)=-t(2\cdot 5+5^2t), \quad a_2(t)=-t(11\cdot 5+2\cdot 5^3 t+5^4t^2), \\
&a_3(t)=-t(28\cdot 5 +11\cdot 5^3 t+ 2\cdot 5^5 t^2+5^6t^3), \\
&a_4(t)=-t(7\cdot 5^2+ 28\cdot 5^3t+11\cdot 5^5 t^2+2\cdot 5^7t^3+5^8t^4).
\end{align*}
\end{lemma}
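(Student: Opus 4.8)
The plan is to view $t_0,\dots ,t_4$ as the five branches of the function $t$ under the degree-$5$ map $\tau\mapsto(\tau+\lambda)/5$. Then the coefficients on the left of \eqref{poly} are, up to sign, the elementary symmetric functions $e_i(t_0,\dots ,t_4)$, and I would (i) show that each $e_i$ is a modular function on $\Gamma_0(10)$, (ii) use the cusp data to pin down its shape as a polynomial in $t$ of the correct degree, and (iii) read off the exact coefficients from a finite $q$-expansion comparison. Concretely one has $a_{5-i}=(-1)^i e_i$, so that $a_0=-e_5=-\prod_\lambda t_\lambda$ down to $a_4=-e_1=-\sum_\lambda t_\lambda$.

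The first and conceptually central step is modularity. Writing $\gamma=\left(\begin{smallmatrix}a&b\\ c&d\end{smallmatrix}\right)\in\Gamma_0(10)$ (so $10\mid c$) and $A_\lambda=\left(\begin{smallmatrix}1&\lambda\\ 0&5\end{smallmatrix}\right)$, a direct multiplication gives
\[
A_\lambda\gamma=\begin{pmatrix}a+\lambda c & b+\lambda d\\ 5c & 5d\end{pmatrix}=\gamma' A_\mu,\qquad \gamma'=\begin{pmatrix}a+\lambda c & *\\ 5c & d-c\mu\end{pmatrix},
\]
where $\mu$ is the residue mod $5$ determined by $a\mu\equiv b+\lambda d\pmod 5$. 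Since $10\mid c$ one checks $\gamma'\in\Gamma_0(10)$ and $\det\gamma'=1$, and because $a,d$ are units mod $5$ the assignment $\lambda\mapsto\mu$ is a permutation of $\mathbb{Z}/5\mathbb{Z}$. As $t$ is a weight-$0$ modular function on $\Gamma_0(10)$, this yields $t_\lambda(\gamma\tau)=t_\mu(\tau)$, so $\Gamma_0(10)$ merely permutes $\{t_\lambda\}$ and therefore fixes every $e_i$. Since $t$ is holomorphic on $\mathbb{H}$, so are the $t_\lambda$ and the $e_i$, which are thus modular functions on $\Gamma_0(10)$ with poles supported on the cusps.

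Next I would fix the shape of the coefficients. Using the cusp orders of $t$ from the table together with the way $\tau\mapsto(\tau+\lambda)/5$ acts on the four cusps $d\in\{1,2,5,10\}$ of $\Gamma_0(10)$ (their widths and ramification), I would bound the order of each $e_i$ at every cusp; this shows that $e_i$ has poles only where $t$ does and vanishes at $d=5,10$, so that $e_i$ is $t$ times a polynomial in $t$ of the degree recorded in the statement. The exact coefficients are then certified on the genus-$0$ curve $X_0(10)$ by the valence formula: the difference between $e_i$ and the candidate polynomial in $t$ is holomorphic on $\mathbb{H}$ with total pole order bounded by the above analysis, so once its $q$-expansion at $\infty$ is shown to vanish beyond that bound it must vanish identically. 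In practice this reduces to expanding $t$ and each $t_\lambda$ in $q$ and matching finitely many coefficients; the case $a_0$ is just the norm identity $\prod_\lambda t_\lambda=t$.

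The main obstacle is the cusp bookkeeping in the third step. The substitution $\tau\mapsto(\tau+\lambda)/5$ permutes the cusps of $\Gamma_0(10)$ and rescales their widths, so computing the orders of the conjugates $t_\lambda$ at all four cusps — and hence guaranteeing both that the symmetric functions actually descend to $\mathbb{C}[t]$ (rather than only to the full field $\mathbb{C}(\rho)$, cf.\ Lemma \ref{lem-sigma-t}) and that the common factor $t$ and the degrees come out as claimed — is the delicate part. Once the admissible form is in hand, determining the explicit $a_0,\dots ,a_4$ is a routine though lengthy $q$-series computation that the valence formula reduces to a finite verification.
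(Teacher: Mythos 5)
Your proposal is correct, and its skeleton (the coefficients are elementary symmetric functions of the conjugates $t_\lambda$, hence $\Gamma_0(10)$-invariant, and the resulting identities in $t$ are certified by a finite $q$-expansion check) is sound; but the mechanics differ genuinely from the paper's. The paper never argues modularity of the $e_i$ explicitly. Instead it first evaluates $a_0=-\prod_\lambda t_\lambda=-t$ by a direct manipulation of the infinite products defining the eta quotient $t$ (your ``norm identity,'' proved bare-hands rather than by valence), then divides through by this norm to convert the problem into computing the elementary symmetric functions $s_j$ of the \emph{inverses} $t_0^{-1},\dots,t_4^{-1}$, via $a_j(t)=(-1)^{j+1}t\,s_j(t_0^{-1},\dots,t_4^{-1})$. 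The key observation is that the power sums of the inverses are $U_5$-images, $p_j=5U_5(t^{-j})$, and the four formulas $U_5(t^{-1})=-2-5t$ through $U_5(t^{-4})=-42\cdot 5-5^7t^4$ are quoted/verified (they already appear in Xiong's work); Newton's relations then assemble $s_1,\dots,s_4$. This buys a significant saving precisely at the point you flag as your main obstacle: the paper needs cusp bookkeeping only for the four functions $U_5(t^{-j})$, which are honest modular functions on $\Gamma_0(10)$ with $q$-expansions computable directly at $\infty$, rather than for each symmetric function of the conjugates $t_\lambda$ separately (whose expansions live in $q^{1/5}$ with root-of-unity coefficients and whose cusp orders under $\tau\mapsto(\tau+\lambda)/5$ must be tracked at all four cusps). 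Your route, in exchange, is more self-contained conceptually: the explicit computation $A_\lambda\gamma=\gamma'A_\mu$ showing that $\Gamma_0(10)$ permutes the $t_\lambda$ is a clean Galois-style argument the paper leaves implicit. One caution on your step (ii): since $t$ has degree $2$ on $X_0(10)$ (it is not a hauptmodul, cf.\ Lemma \ref{lem-sigma-t}), pole locations alone do not force $e_i\in t\,\mathbb{C}[t]$ --- a priori you only get $e_i\in\mathbb{C}(\rho)$; but as you note, this does not matter for the final valence-formula certification against the explicit candidates, since that argument only needs the total pole bound, and membership in $t\,\mathbb{C}[t]$ follows a posteriori from the verified identity.
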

\begin{proof}
First we prove that
\begin{align}
a_0(t)=-\prod\limits_{\lambda=0}^4 t_\lambda=-t. \label{a0}
\end{align}
We have
\begin{align*}
\prod\limits_{\lambda=0}^4 t_\lambda&=\left( \prod\limits_{\lambda=0}^4 e^{2\pi i \cdot \frac{\tau+\lambda}{5}} \right)\cdot
\prod\limits_{n=1}^\infty \prod\limits_{\lambda=0}^4 \frac{\left(1-e^{2\pi i n(\tau+\lambda)}\right)^2\left(1-e^{4\pi i n(\tau+\lambda)}\right)^2}{\left(1-e^{2\pi in \cdot \frac{\tau+\lambda}{5}}\right)^2\left(1-e^{2\pi in \cdot \frac{2(\tau+\lambda)}{5}}\right)^2} \nonumber \\
&=q\left(\prod\limits_{n=1}^\infty (1-q^n)^2(1-q^{2n})^2 \right)^5 \cdot \prod\limits_{\begin{smallmatrix} n=1 \\ 5|n\end{smallmatrix}}^\infty
\frac{1}{(1-q^{n/5})^{10}(1-q^{2n/5})^{10}} \\
&\quad \cdot \prod\limits_{\begin{smallmatrix} n=1 \\ 5 \nmid n \end{smallmatrix}}^{\infty} \frac{1}{(1-q^n)^2(1-q^{2n})^2} \\
&=q\frac{(q^5;q^5)_{\infty}^2(q^{10};q^{10})_{\infty}^2}{(q;q)_{\infty}^2(q^2;q^2)_{\infty}^2} \\
&=t.
\end{align*}
Now we use \eqref{a0} to rewrite \eqref{poly} as
\begin{align}
X^5+\sum_{j=0}^4a_j(t)X^j=-t\prod\limits_{\lambda=0}^4 (1-Xt_{\lambda}^{-1}).
\end{align}
Hence we have
\begin{align}
a_j(t)=(-1)^{j+1}ts_j(t_0^{-1}, \dots, t_4^{-1}), \quad 0\leq j \leq 4, \label{a-s-relation}
\end{align}
where the $s_j$ are the elementary symmetric functions. Now it suffices to evaluate $s_j$ explicitly. For this we use the fact that $5U_5(t^{-j})=p_j$ where
\begin{align}
p_j=\sum\limits_{\lambda=0}^4 t_{\lambda}^{-j}, \quad j \in \mathbb{Z}.
\end{align}
By comparing the orders at cusps and direct computations, we find  that
\begin{align*}
&U_5(t^{-1})=-2-5t, \quad U_5(t^{-2})=-2-5^3t^2, \\
&U_5(t^{-3})=46-5^5t^3, \quad U_5(t^{-4})=-42\cdot 5 -5^7t^4.
\end{align*}
The above formulas can also be found in the work of Xiong \cite{Xiong}.
Using Newton's relation, we get
\begin{align*}
&s_1=p_1=-2\cdot 5 -5^2 t, \\
&s_2=(s_1p_1-p_2)/2= 11\cdot 5+ 2\cdot 5^3  t+5^4t^2, \\
&s_3=(s_2p_1-s_1p_2+p_3)/3=-(28\cdot 5 + 11\cdot 5^3 t+ 2\cdot 5^5 t^2+5^6t^3), \\
&s_4=(s_3p_1-s_2p_2+s_1p_3-p_4)/4=7\cdot 5^2+ 28\cdot 5^3t+11\cdot 5^5 t^2+ 2\cdot 5^7t^3+5^8t^4.
\end{align*}
Together with \eqref{a-s-relation}, we complete the proof of the lemma.
\end{proof}
Lemma \ref{lem-modular} implies the following important recurrence relation, which is the key in our proof.
\begin{lemma}\label{lem-key}
For $u:\mathbb{H}\rightarrow \mathbb{C}$ and $j\in \mathbb{Z}$, we have
\begin{align}
U_5(ut^j)=-\sum_{l=0}^4a_l(t)U_5(ut^{j+l-5}).
\end{align}
\end{lemma}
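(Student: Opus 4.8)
The plan is to unwind the definition of Atkin's operator and then exploit the polynomial identity from Lemma \ref{lem-modular}, which says precisely that the five functions $t_0,\dots,t_4$ are the roots of $X^5+\sum_{j=0}^4 a_j(t)X^j$. First I would write, for any integer $j$,
\begin{align}
U_5(ut^j)(\tau)=\frac{1}{5}\sum_{\lambda=0}^4 u\left(\frac{\tau+\lambda}{5}\right)t\left(\frac{\tau+\lambda}{5}\right)^j=\frac{1}{5}\sum_{\lambda=0}^4 u_\lambda\,t_\lambda^{\,j},
\end{align}
where I abbreviate $u_\lambda:=u((\tau+\lambda)/5)$ and retain the notation $t_\lambda$ of Lemma \ref{lem-modular}.

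Next, since \eqref{poly} asserts that each $t_\lambda$ satisfies $t_\lambda^5+\sum_{l=0}^4 a_l(t)t_\lambda^{\,l}=0$, I would rewrite this as $t_\lambda^{\,5}=-\sum_{l=0}^4 a_l(t)t_\lambda^{\,l}$ and multiply through by $t_\lambda^{\,j-5}$ to obtain, for every $\lambda$ and every $j\in\mathbb{Z}$,
\begin{align}
t_\lambda^{\,j}=-\sum_{l=0}^4 a_l(t)\,t_\lambda^{\,j+l-5}.
\end{align}
Allowing arbitrary $j\in\mathbb{Z}$ is legitimate because $t$ is a nonzero modular function, so each $t_\lambda$ is a nonzero meromorphic function on $\mathbb{H}$ and the identity holds in the field of meromorphic functions.

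The decisive observation is that the coefficients $a_l(t)=a_l(t(\tau))$ depend only on $\tau$ and not on the averaging index $\lambda$. Hence, after multiplying the displayed relation by $u_\lambda$, summing over $0\le\lambda\le 4$, and dividing by $5$, I may pull each $a_l(t)$ outside the average:
\begin{align}
\frac{1}{5}\sum_{\lambda=0}^4 u_\lambda\,t_\lambda^{\,j}=-\sum_{l=0}^4 a_l(t)\cdot\frac{1}{5}\sum_{\lambda=0}^4 u_\lambda\,t_\lambda^{\,j+l-5}.
\end{align}
Recognizing the left-hand side as $U_5(ut^j)$ and the inner averages on the right as $U_5(ut^{j+l-5})$ yields the claimed recurrence.

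There is no genuine obstacle in this argument; the entire content is supplied by Lemma \ref{lem-modular}, and the only point requiring care is the invariance just noted---namely that $a_l(t)$ is a function of $t(\tau)$ alone and is therefore constant with respect to $\lambda$, which is exactly what Lemma \ref{lem-modular} guarantees by expressing the elementary symmetric functions of $t_0,\dots,t_4$ as polynomials in $t$. One should also observe that $u$ is arbitrary: modularity of $u$ is never used, since the recurrence rests solely on the algebraic relation satisfied by the roots $t_\lambda$.
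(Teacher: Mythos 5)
Your proposal is correct and follows exactly the paper's own argument: the paper likewise multiplies the root relation $t_\lambda^5+\sum_{l=0}^4 a_l(t)t_\lambda^l=0$ from Lemma \ref{lem-modular} by $u_\lambda t_\lambda^{j-5}$ and sums over $\lambda$, with the $a_l(t)$ passing through the average since they depend only on $\tau$. Your added remarks---that negative exponents are harmless because $t_\lambda$ is a nonzero meromorphic function, and that no modularity of $u$ is needed---are sound but merely make explicit what the paper leaves implicit.
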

\begin{proof}
By Lemma \ref{lem-modular} we have
\begin{align}
t_\lambda^5+\sum_{l=0}^4 a_l(t)t_{\lambda}^l=0.
\end{align}
Multiplying both sides by $u_\lambda t_{\lambda}^{j-5}$ where $u_{\lambda}(\tau):=u((\tau+\lambda)/5)$ and then summing over $\lambda$ from 0 to 4, we get the desired result.
\end{proof}
\begin{rem}
The deduction of Lemmas \ref{lem-modular} and \ref{lem-key} follow the arguments used in the work of Paule and Radu \cite{Paule-Radu}. However, the functions we use here are different.
\end{rem}

Lemma \ref{lem-key}  reduces the evaluation of $U_5(ut^j)$ to the evaluations of $U_5(ut^j)$ for $-4\leq j \leq 0$. Next we will set $u$ to be $F$, $F\rho$, $FZ$ and $FZ\rho$, respectively, to get their explicit representations. We shall use $\lceil x \rceil$ (resp.\ $\lfloor x\rfloor$) to denote the least (resp.\ greatest) integer no less (resp.\ no greater) than $x$.

\begin{lemma}\label{lem-U-Ft}
For $i\in \mathbb{Z}$ we have
\begin{align}\label{U5-Fti-eq}
U_5(Ft^i)=F\sum_{j=\left\lceil \frac{i}{5}\right\rceil}^\infty m(i,j)t^j,
\end{align}
where the values of $m(i,j)$ for $-4\leq i \leq 0$ are given in Group I of the Appendix, and for other $i$, $m(i,j)$ satisfies
\begin{align}\label{recurrence}
m(i,j)=&~\Big( 7\cdot 5^2 m(i-1,j-1)+ 28\cdot 5^3 m(i-1,j-2) +11 \cdot 5^5  m(i-1,j-3)\nonumber \\
&~ + 2\cdot 5^7m(i-1,j-4)+5^8m(i-1,j-5)  \Big) +\Big(28\cdot 5 m(i-2,j-1) \nonumber \\
&~ + 11\cdot 5^3 m(i-2,j-2)  +2\cdot 5^5 m(i-2,j-3)+5^6m(i-2,j-4) \Big)\nonumber \\
&~ + \Big(11\cdot 5 m(i-3,j-1)+ 2\cdot 5^3 m(i-3,j-2)+5^4m(i-3,j-3)  \Big) \nonumber \\
&~ +\Big(2\cdot 5 m(i-4,j-1)+5^2m(i-4,j-2) \Big)+m(i-5,j-1).
\end{align}
\end{lemma}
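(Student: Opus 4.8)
The plan is to combine the recurrence of Lemma \ref{lem-key} with the base evaluations of $U_5(Ft^i)$ for $-4\le i\le 0$. First I would establish the five base cases by a direct modular-forms argument. Setting $u=F$ in Lemma \ref{lem-key} requires knowing $U_5(Ft^i)$ for $-4\le i\le 0$ explicitly; these are the inputs recorded in Group I of the Appendix. To verify them I would argue as follows: $F$ is a weight $2$ modular form on $\Gamma_0(10)$ and $t$ is a modular function on $\Gamma_0(10)$, so each $Ft^i$ is a weight $2$ meromorphic modular form whose poles are supported at the cusps. Applying $U_5$ produces a weight $2$ form on $\Gamma_0(10)$ whose only possible poles are again at the cusps; using the cusp-order table for $t$ (and the known behaviour of $F$ at the cusps), I would check that $U_5(Ft^i)/F$ is a modular function on $\Gamma_0(10)$ holomorphic away from the cusp $c/5$, hence a polynomial-type Laurent expansion in the hauptmodul-related quantity $t$, with a pole of order at most $\lceil i/5\rceil$ controlled by the cusp data. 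Matching finitely many Fourier coefficients then pins down the coefficients $m(i,j)$ in the expansion $U_5(Ft^i)=F\sum_{j\ge\lceil i/5\rceil} m(i,j)t^j$.

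Next I would prove the recurrence \eqref{recurrence} for general $i$ by induction, reducing every case to the five base cases. The engine is Lemma \ref{lem-key} with $u=F$: it yields
\begin{align}
U_5(Ft^j)=-\sum_{l=0}^4 a_l(t)\,U_5\!\left(Ft^{\,j+l-5}\right),
\end{align}
which expresses $U_5(Ft^i)$ for $i\ge 1$ in terms of $U_5(Ft^{i'})$ with strictly smaller $i'$, and likewise handles $i\le -5$ by running the relation in the other direction. Substituting the explicit polynomials $a_l(t)$ from Lemma \ref{lem-modular} and the ansatz $U_5(Ft^{i'})=F\sum_j m(i',j)t^j$, I would collect the coefficient of $Ft^j$ on both sides. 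Since each $a_l(t)$ is $t$ times a polynomial in $t$ with the specific coefficients $7\cdot5^2,28\cdot5^3,\dots$ listed in Lemma \ref{lem-modular}, multiplying by $a_l(t)$ shifts the index $j$ and scales by those coefficients; reading off the coefficient of $t^j$ reproduces exactly the five grouped terms in \eqref{recurrence}, indexed by $i-1,\dots,i-5$. This is essentially bookkeeping: the five blocks of \eqref{recurrence} correspond to the five summands $a_0,\dots,a_4$, and the internal shifts $j-1,\dots,j-5$ come from the powers of $t$ inside each $a_l(t)$.

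The main obstacle will be the base-case computation rather than the inductive step. Establishing the recurrence is a mechanical coefficient comparison once the algebraic identity of Lemma \ref{lem-key} is in hand, and convergence of the expansion (so that interchanging $U_5$ with the series is legitimate) follows from the lower bound $j\ge\lceil i/5\rceil$ on the summation, which I would justify via the pole orders at the cusp $c/5$. The genuinely delicate part is verifying that for $-4\le i\le 0$ the function $U_5(Ft^i)/F$ is indeed a \emph{holomorphic} function of $t$ away from $c/5$ with the asserted leading pole order, and then computing the finitely many coefficients $m(i,j)$ correctly; this requires a careful analysis of the images under $U_5$ of the cusps of $\Gamma_0(10)$ and of the weight-$2$ Eisenstein combination defining $F$. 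I expect this to be the step demanding the most care, and I would offload the resulting explicit coefficient tables to the Appendix, as the statement already does.
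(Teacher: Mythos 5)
Your proposal matches the paper's proof: the paper likewise takes the Group I identities as base cases (verified, as you suggest, by comparing orders at cusps and finitely many Fourier coefficients) and then applies Lemma \ref{lem-key} with $u=F$ together with induction on $i$ in both directions, the recurrence \eqref{recurrence} being exactly the coefficient-of-$t^j$ extraction from the polynomials $a_l(t)$ of Lemma \ref{lem-modular}. One small slip in your sketch of the base cases: an expansion in non-negative powers of $t$ is holomorphic away from the cusps $c/d$ with $d\in\{1,2\}$ (where $t$ has its poles, per the cusp-order table), not away from $c/5$; this only affects the bookkeeping of the Appendix verification, not the structure of the argument.
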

\begin{proof}
For $-4\leq i \leq 0$, \eqref{U5-Fti-eq} has been verified in Group I of the Appendix. The theorem follows by Lemma \ref{lem-key} and mathematical induction on $i$.
\end{proof}
\begin{rem}
By definition it is not hard to see that for each $i$, $m(i,j)\neq 0$ only for finitely many $j$. Therefore, the sum in Lemma \ref{lem-U-Ft} is  indeed a finite sum. Here and later in Lemmas \ref{lem-U-Fat}--\ref{lem-U-FZat} as well as Theorem \ref{thm-c(n)-gen}, for simplicity we just allow the summation index $j$ to range to $\infty$ although the sums there are all finite.
\end{rem}

Similarly, we can prove the following results.
\begin{lemma}\label{lem-U-Fat}
For $i\in \mathbb{Z}$ we have
\begin{align}
U_5(F\rho t^i)=F\Big((\sum_{j=\left\lceil \frac{i}{5}\right\rceil}^\infty x_0(i,j)t^j)+\rho(\sum_{j=\left\lceil \frac{i}{5}\right\rceil}^\infty x_1(i,j)t^j) \Big),
\end{align}
where the values of $x_0(i,j)$ and $x_1(i,j)$ for $-4\leq i \leq 0$ are given in Group \uppercase\expandafter{\romannumeral2} of the Appendix, and for other $i$, both $x_0(i,j)$ and $x_1(i,j)$ satisfy the recurrence relation in \eqref{recurrence}.
\end{lemma}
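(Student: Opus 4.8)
The plan is to follow the strategy of Lemma~\ref{lem-U-Ft} essentially verbatim, the single new feature being that the image now genuinely splits into a $1$-part and a $\rho$-part. The structural reason for this two-term shape is the quadratic relation $\rho^2=\rho+5\rho t-t$ of Lemma~\ref{lem-sigma-t}: rewritten as $\rho^2-(1+5t)\rho+t=0$, it presents $\rho$ as a root of a monic quadratic over $\mathbb{C}[t]$, so $\mathbb{C}[t][\rho]$ modulo this relation is a free $\mathbb{C}[t]$-module of rank two with basis $\{1,\rho\}$. Equivalently, $t=(\rho^2-\rho)/(5\rho-1)$ is a rational function of the hauptmodul $\rho$, and every polynomial in $\rho$ and $t$ collapses to the normal form $A(t)+\rho B(t)$, which is exactly the shape claimed for $U_5(F\rho t^i)/F$.

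First I would settle the base cases $-4\le i\le 0$ by the direct computation recorded in Group~II of the Appendix. The point is that $U_5$ maps weakly holomorphic weight-$2$ forms on $\Gamma_0(10)$ into themselves because $5\mid 10$; hence $U_5(F\rho t^i)$ is again such a form, and dividing by $F$ yields a modular function on $\Gamma_0(10)$, i.e.\ a rational function of $\rho$. Controlling its poles with the orders of $\rho$ and $t$ at the four cusps $d\in\{1,2,5,10\}$ tabulated above places it in $\mathbb{C}[t][\rho]$, and the quadratic relation then reduces it to $A(t)+\rho B(t)$. Throughout this range $\lceil i/5\rceil=0$, so $A$ and $B$ are honest polynomials in $t$; the cusp orders also bound their degrees, and finitely many Fourier coefficients pin down each $x_0(i,j)$ and $x_1(i,j)$.

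For the remaining indices I would invoke the recurrence of Lemma~\ref{lem-key} with $u=F\rho$, used upward for $i\ge 1$ and downward for $i\le-5$. In the downward direction one solves the recurrence for its extreme summand: the lowest-index term $U_5(F\rho t^{i-5})$ carries the coefficient $-a_0(t)=t$, a unit in $\mathbb{C}[t,t^{-1}]$, so dividing by $t$ expresses the most negative index in terms of larger ones (this is the source of the negative powers of $t$ permitted by $j\ge\lceil i/5\rceil$). In either direction I substitute $U_5(F\rho t^{i+l-5})=F\bigl(\sum_j x_0(i+l-5,j)t^j+\rho\sum_j x_1(i+l-5,j)t^j\bigr)$ and multiply by the $a_l(t)$. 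Since each $a_l(t)$ is a polynomial in $t$ alone, it never mixes the $1$-part with the $\rho$-part; collecting the coefficient of $t^j$ in each part, and noting that the $a_l(t)$ are the very polynomials that generated \eqref{recurrence}, shows that $x_0(i,\cdot)$ and $x_1(i,\cdot)$ both obey exactly that recurrence.

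The main obstacle is the base-case verification, not the induction. The delicate point there is to prove that $U_5(F\rho t^i)/F$ has poles only at the cusps where $\rho$ and $t$ are themselves permitted poles, and with bounded order, so that it truly lands in the rank-two module $\mathbb{C}[t]\oplus\rho\,\mathbb{C}[t]$ rather than in the full field of modular functions on $\Gamma_0(10)$; this, together with fixing the correct starting index and finite length of each series, is a bookkeeping argument in the orders of $F$, $\rho$ and $t$ at the four cusps plus a comparison of enough leading $q$-coefficients. Once the base cases are in hand, the propagation through \eqref{recurrence} is identical to that already carried out for Lemma~\ref{lem-U-Ft}, so no further difficulty arises.
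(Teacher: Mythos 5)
Your proposal is correct and follows essentially the same route as the paper: the paper also verifies the base cases $-4\le i\le 0$ via the explicit identities in Group II of the Appendix (proved by comparing orders of vanishing at the cusps and Fourier coefficients) and then applies Lemma \ref{lem-key} with $u=F\rho$ together with induction on $i$ in both directions. Your added observations---that each $a_l(t)\in\mathbb{C}[t]$ so the $1$-part and $\rho$-part never mix (hence $x_0$ and $x_1$ separately satisfy \eqref{recurrence}), and that the downward step solves for the $l=0$ term with unit coefficient $-a_0(t)=t$, producing the negative powers allowed by $j\ge\lceil i/5\rceil$---are exactly the tacit content of the paper's induction.
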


\begin{lemma}\label{lem-U-FZt}
For $i\in \mathbb{Z}$ we have
\begin{align}
U_5(FZ t^i)=F\Big((\sum_{j=\left\lceil \frac{i+1}{5}\right\rceil}^\infty y_0(i,j)t^j)+\rho(\sum_{j=\left\lceil \frac{i+1}{5}\right\rceil}^\infty y_1(i,j)t^j) \Big),
\end{align}
where the values of $y_0(i,j)$ and $y_1(i,j)$ for $-4\leq i \leq 0$ are given in Group \uppercase\expandafter{\romannumeral3} of the Appendix, and for other $i$, both $y_0(i,j)$ and $y_1(i,j)$ satisfy the recurrence relation in \eqref{recurrence}.
\end{lemma}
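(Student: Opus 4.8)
The plan is to prove Lemma \ref{lem-U-FZt} by exactly the two-stage scheme already used for Lemmas \ref{lem-U-Ft} and \ref{lem-U-Fat}: first pin down the five base cases $-4\le i\le 0$ by a direct computation, then propagate to all $i\in\mathbb{Z}$ through the recurrence of Lemma \ref{lem-key}. The structural fact that makes the scheme run is that $U_5(FZt^i)$ is again a meromorphic modular form of weight $2$ on $\Gamma_0(10)$, holomorphic on $\mathbb{H}$. Indeed, $F$ and $t$ live on $\Gamma_0(10)$ while $Z$ lives on $\Gamma_0(50)$, so $FZt^i$ is a weight-$2$ meromorphic form on $\Gamma_0(50)$; since $5^2\mid 50$, the operator $U_5$ lowers the level from $\Gamma_0(50)$ to $\Gamma_0(50/5)=\Gamma_0(10)$. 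Because $\rho$ is a hauptmodul for $\Gamma_0(10)$ and, by Lemma \ref{lem-sigma-t}, $\rho$ satisfies the quadratic $\rho^2-(1+5t)\rho+t=0$ over $\mathbb{C}(t)$, the pair $\{1,\rho\}$ is a $\mathbb{C}(t)$-basis of the field of modular functions on $\Gamma_0(10)$ (the degree is $2$ because $t$ has two poles, at the cusps $d=1,2$). Dividing $U_5(FZt^i)$ by $F$ therefore produces a modular function of the unique shape $A(t)+\rho B(t)$ with $A,B\in\mathbb{C}(t)$, which is the format asserted in the lemma.

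For the base cases $-4\le i\le 0$ I would make this explicit. Using the order table for $\rho$ and $t$, together with the holomorphy of $F$ and the vanishing order of $Z$ at the cusps $1,1/2,1/5,1/10$, I would bound the order of $U_5(FZt^i)$ at each cusp of $\Gamma_0(10)$; these bounds force $A$ and $B$ to be genuine Laurent polynomials in $t$ supported in $j\ge\lceil (i+1)/5\rceil$. The extra $+1$ relative to the bound $\lceil i/5\rceil$ of Lemma \ref{lem-U-Ft} is precisely the contribution of the factor $Z=\eta(50\tau)/\eta(2\tau)=q^{2}(q^{50};q^{50})_\infty/(q^{2};q^{2})_\infty$, which vanishes to order $2$ at infinity. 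Once the finitely many admissible exponents of $t$ are known (an upper bound coming from the poles of $t$ at $d=1,2$), matching enough Fourier coefficients of the two sides turns the determination of $y_0(i,j)$ and $y_1(i,j)$ into a finite, over-determined linear system whose solution is recorded in Group III of the Appendix.

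For the remaining $i$ I would argue by induction via Lemma \ref{lem-key} with $u=FZ$. Substituting $U_5(FZt^i)=F\big(\sum_j y_0(i,j)t^j+\rho\sum_j y_1(i,j)t^j\big)$ and the explicit $a_l(t)$ of Lemma \ref{lem-modular} into $U_5(FZt^i)=-\sum_{l=0}^4 a_l(t)U_5(FZt^{i+l-5})$, then collecting powers of $t$, yields exactly the recurrence \eqref{recurrence} for $y_0$ and for $y_1$ separately; the two components decouple because $\{1,\rho\}$ is a $\mathbb{C}[t]$-basis and each $a_l(t)$ is a polynomial in $t$, so multiplication by $a_l(t)$ only shifts and scales $t$-powers. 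This climbs upward to all $i\ge1$. To descend to $i\le-5$ I solve the same relation for its lowest term: since $a_0(t)=-t$ and every $a_l(t)$ is divisible by $t$, the equation can be solved for $U_5(FZt^{i-5})$ after dividing by $t$, keeping everything inside $F\cdot(\mathbb{C}[t,t^{-1}]\oplus\rho\,\mathbb{C}[t,t^{-1}])$, and a direct check confirms that the lower summation bound $\lceil (i+1)/5\rceil$ is respected in both directions.

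The main obstacle I anticipate is the base-case computation, and within it the cusp analysis. Confirming that $U_5(FZt^i)$ genuinely lands in weight-$2$ forms on $\Gamma_0(10)$, and more delicately obtaining order bounds at all four cusps sharp enough to guarantee that $A(t)$ and $B(t)$ are Laurent polynomials with the stated starting index $\lceil (i+1)/5\rceil$, is where the real work lies. Once the five base cases are in hand, the passage to general $i$ is purely formal bookkeeping driven by Lemma \ref{lem-key}, identical in form to the induction already carried out in Lemmas \ref{lem-U-Ft} and \ref{lem-U-Fat}.
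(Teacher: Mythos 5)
Your proposal follows essentially the same route as the paper: the five base cases $-4\le i\le 0$ are established exactly as the Appendix indicates (by comparing orders of vanishing at the cusps and Fourier coefficients of the functions involved), and the extension to all $i\in\mathbb{Z}$ is the same induction driven by Lemma \ref{lem-key}, whose coefficients $-a_l(t)$ from Lemma \ref{lem-modular} are precisely what produces the recurrence \eqref{recurrence} for $y_0$ and $y_1$ separately. The structural points you add --- that $U_5$ lowers the level from $\Gamma_0(50)$ to $\Gamma_0(10)$ since $5^2\mid 50$, and that $\{1,\rho\}$ is a $\mathbb{C}(t)$-basis forced by Lemma \ref{lem-sigma-t}, making $F\bigl(A(t)+\rho B(t)\bigr)$ the unique normal form --- are correct and simply make explicit what the paper leaves implicit.
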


\begin{lemma}\label{lem-U-FZat}
For $i\in \mathbb{Z}$ we have
\begin{align}
U_5(FZ\rho t^i)=F\Big((\sum_{j=\left\lceil \frac{i+2}{5}\right\rceil}^\infty z_0(i,j)t^j)+\rho(\sum_{j=\left\lceil \frac{i+2}{5}\right\rceil}^\infty z_1(i,j)t^j) \Big),
\end{align}
where the values of $z_0(i,j)$ and $z_1(i,j)$ for $-4\leq i \leq 0$ are given in Group \uppercase\expandafter{\romannumeral4} of the Appendix, and for other $i$, both $z_0(i,j)$ and $z_1(i,j)$ satisfy the recurrence relation in \eqref{recurrence}.
\end{lemma}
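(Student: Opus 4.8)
The plan is to mirror the proof of Lemma~\ref{lem-U-Ft}, specializing the recurrence of Lemma~\ref{lem-key} to $u=FZ\rho$; the statement then rests on a finite list of base cases together with their propagation by that recurrence. I would first treat the base cases $-4\le i\le 0$. The function $FZ\rho\, t^i$ is a weight $2$ meromorphic modular form on $\Gamma_0(50)$, since $Z$ lives on $\Gamma_0(50)$ while $F$, $\rho$ and $t$ live on $\Gamma_0(10)$; by Paule--Radu-type level lowering $U_5$ carries it to a weight $2$ meromorphic modular form on $\Gamma_0(10)$. Because $X_0(10)$ has genus $0$ and, by Lemma~\ref{lem-sigma-t}, $\rho$ satisfies the quadratic $\rho^2-(1+5t)\rho+t=0$ over $\mathbb{C}(t)$, the function field is $\mathbb{C}(t)[\rho]$ with basis $\{1,\rho\}$, so every such form is uniquely $F\big(a(t)+\rho\, b(t)\big)$ with $a,b\in\mathbb{C}(t)$. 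One then checks that the poles of the image sit only at the cusps $d\in\{1,2\}$, which forces $a,b$ to be polynomials in $t$ and fixes the initial index $\lceil (i+2)/5\rceil$; in practice I would expand $U_5(FZ\rho\, t^i)$ in $q$ to an explicit valence bound and match it against $F\big(\sum_j z_0(i,j)t^j\big)+F\rho\big(\sum_j z_1(i,j)t^j\big)$, which is exactly what is recorded in Group~\uppercase\expandafter{\romannumeral4} of the Appendix.

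Next I would run the induction on $i$ using Lemma~\ref{lem-key} with $u=FZ\rho$, namely $U_5(FZ\rho\, t^i)=-\sum_{l=0}^4 a_l(t)\,U_5(FZ\rho\, t^{i+l-5})$. For $i\ge 1$ each index $i+l-5$ lies in $\{i-5,\dots,i-1\}$ and so is strictly below $i$; by the induction hypothesis every term on the right has the shape $F(\cdots)+F\rho(\cdots)$, and since each $a_l(t)$ is a polynomial in $t$ by Lemma~\ref{lem-modular}, multiplying and summing preserves this shape. Reading off the coefficients of $t^j$ and of $\rho\, t^j$ then yields precisely the recurrence~\eqref{recurrence} for $z_0(i,j)$ and $z_1(i,j)$. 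For $i\le -5$ I would run the recurrence downward: solving the same relation for its lowest-order term and using $a_0(t)=-t$ expresses $U_5(FZ\rho\, t^i)$ through the five forms of indices $i+1,\dots,i+5$ after a division by $t$, which carries the induction to all $i\in\mathbb{Z}$.

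The step I expect to be the genuine obstacle is the cusp-order bookkeeping inside the base cases. Even granting the level lowering, one must verify that the image has poles only at $d\in\{1,2\}$ and not at $d\in\{5,10\}$ --- so that no negative powers of $t$ are needed --- and must compute the exact pole orders there; this is what simultaneously confirms membership in $F\,\mathbb{C}[t]\oplus F\rho\,\mathbb{C}[t]$ and produces the truncation $\lceil (i+2)/5\rceil$. The extra unit over the $\lceil (i+1)/5\rceil$ of Lemma~\ref{lem-U-FZt} comes from the order contributed by the additional factor $\rho$ at the relevant cusp, and tracking it correctly from the orders of $\rho$ and $t$ in the table (together with those of $F$ and $Z$ and their behaviour under $U_5$) is exactly what distinguishes this lemma from Lemmas~\ref{lem-U-Ft}--\ref{lem-U-FZt}. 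Once these orders are pinned down the image is determined by finitely many Fourier coefficients, and the recurrence finishes the argument mechanically.
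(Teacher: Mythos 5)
Your proposal follows essentially the same route as the paper: the paper's (terse) proof likewise verifies the base cases $-4\le i\le 0$ by comparing orders of vanishing at the cusps and Fourier coefficients (the identities recorded in Group \uppercase\expandafter{\romannumeral4} of the Appendix) and then extends to all $i\in\mathbb{Z}$ via Lemma \ref{lem-key} and induction on $i$ in both directions. Your additional detail---the $U_5$ level lowering from $\Gamma_0(50)$ to $\Gamma_0(10)$, the decomposition of the function field as $\mathbb{C}(t)\oplus\mathbb{C}(t)\rho$ via Lemma \ref{lem-sigma-t}, and the cusp-order bookkeeping yielding the truncation $\lceil (i+2)/5\rceil$---is a correct fleshing-out of what the paper leaves implicit.
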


Let $\pi(n)$ denote the 5-adic order of $n$ and we agree that $\pi(0)=\infty$.
We now analyze the 5-adic orders of the coefficients appeared in Lemmas \ref{lem-U-Ft}--\ref{lem-U-FZat}. Since they satisfy the common recurrence relation \eqref{recurrence}, we first derive the following result.
\begin{lemma}\label{lem-common}
Let $g(i,j)$ be integers which satisfy the recurrence relation \eqref{recurrence}. Suppose there exists an integer $l$ and a constant $\gamma$ such that for $l\leq i\leq l+4$ we have
\begin{align}
\pi(g(i,j))\geq \left\lfloor \frac{5j-i+\gamma}{3} \right\rfloor. \label{suppose}
\end{align}
Then \eqref{suppose} holds for any $i\in \mathbb{Z}$.
\end{lemma}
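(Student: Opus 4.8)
The plan is to prove \eqref{suppose} for all $i\in\mathbb{Z}$ by a two–sided strong induction on $i$, taking as the base case the five consecutive rows $l\le i\le l+4$ furnished by hypothesis. Write $A=5j-i+\gamma$, so that the asserted bound reads $\pi(g(i,j))\ge\lfloor A/3\rfloor$, and note that it is automatic whenever $g(i,j)=0$, under the convention $\pi(0)=\infty$. The induction \emph{upward} uses the recurrence \eqref{recurrence} directly to express $g(i,j)$ through the rows $i-1,\dots,i-5$; the induction \emph{downward} uses the same recurrence, solved for its final term $g(i-5,j-1)$, to express the row $i-5$ through the rows $i,i-1,\dots,i-4$. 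In both directions each step consumes five consecutive, already established rows, so once the five base rows are in place the bound propagates to every integer $i$.

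The analytic core reduces to a single term-by-term estimate. Every summand on the right of \eqref{recurrence} has the shape $c\,5^{a}\,g(i-b,j-d)$ with $c$ coprime to $5$; since $\pi(5^a)=a$ and, by the inductive hypothesis applied to row $i-b$, $\pi(g(i-b,j-d))\ge\lfloor(A-(5d-b))/3\rfloor$, the $5$-adic order of that summand is at least $a+\lfloor(A-s)/3\rfloor$ with $s:=5d-b$. Using the elementary identity $\max_{A}\bigl(\lfloor A/3\rfloor-\lfloor(A-s)/3\rfloor\bigr)=\lceil s/3\rceil$ (proved by writing $A=3q+r$), this summand meets the target $\lfloor A/3\rfloor$ precisely when $a\ge\lceil s/3\rceil$, i.e.\ when $3a+b\ge 5d$. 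Because the $5$-adic order of a finite sum is at least the minimum of the orders of its terms, verifying $3a+b\ge 5d$ for every term yields $\pi(g(i,j))\ge\lfloor A/3\rfloor$, which is the upward step.

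I would therefore reduce the whole lemma to checking $3a+b\ge 5d$ for the fifteen nonzero coefficients of \eqref{recurrence}; a direct tabulation confirms it in each case (for instance $7\cdot5^2 g(i-1,j-1)$ gives $3\cdot2+1-5=2\ge0$, and $5^8 g(i-1,j-5)$ gives $24+1-25=0\ge0$), each inequality holding with equality or a small positive margin. For the downward step one exploits the key coincidence that the target index $(i-5,j-1)$ carries exactly the same invariant $A$, so the bound to be proved for $g(i-5,j-1)$ is again $\lfloor A/3\rfloor$; since $g(i,j)$ itself satisfies $\pi\ge\lfloor A/3\rfloor$ by the hypothesis on row $i$, and every remaining term satisfies the same bound by the computation just described, the estimate transfers verbatim.

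The main obstacle is organizational rather than conceptual: one must set up the bookkeeping so that the shifts $(b,d)$ and the coefficient valuations $a$ of all fifteen terms are tracked correctly, and one must isolate and prove the clean floor inequality that converts the per-term criterion into the single arithmetic condition $3a+b\ge 5d$. Once this is done, both inductions are immediate, and the decisive point is that the exponent $\lfloor(5j-i+\gamma)/3\rfloor$ is invariant under the substitution $(i,j)\mapsto(i-5,j-1)$ coupled by the recurrence—this invariance is exactly what makes the estimate stable in both directions simultaneously.
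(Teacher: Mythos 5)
Your proposal is correct and takes essentially the same approach as the paper: a two-sided induction on $i$ through the recurrence \eqref{recurrence}, bounding the $5$-adic order of each of the fifteen summands and taking the minimum (your criterion $3a+b\ge 5d$ does hold for all fifteen coefficient triples, with equality exactly where the paper's minimum is attained, and the floor identity $\max_A\bigl(\lfloor A/3\rfloor-\lfloor(A-s)/3\rfloor\bigr)=\lceil s/3\rceil$ is right). Your repackaging merely streamlines what the paper does explicitly --- its proof computes the minimum of the fifteen floor expressions directly for the upward step and disposes of the downward step with an unelaborated ``similarly, by induction'' --- so your reduction to the single arithmetic condition $3a+b\ge 5d$, and your explicit downward argument via the unit coefficient of $g(i-5,j-1)$ together with the invariance of $5j-i+\gamma$ under $(i,j)\mapsto(i-5,j-1)$, are a tidier account of the same proof rather than a different one.
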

\begin{proof}
Suppose \eqref{suppose} holds for $l \leq i \leq k-1$ where $k>l+4$ is an integer. By \eqref{recurrence} we have
\begin{align*}
&\pi(g(k,j))\\
\geq &\min \Big\{\left\lfloor \frac{5(j-1)-(k-1)+\gamma}{3}\right\rfloor+2 , \left\lfloor \frac{5(j-2)-(k-1)+\gamma}{3} \right\rfloor+3,  \\ & \left\lfloor \frac{5(j-3)-(k-1)+\gamma}{3}\right\rfloor +5,
 \left\lfloor \frac{5(j-4)-(k-1)+\gamma}{3} \right\rfloor+7, \\
 & \left\lfloor \frac{5(j-5)-(k-1)+\gamma}{3}\right\rfloor +8, \left\lfloor \frac{5(j-1)-(k-2)+\gamma}{3}\right\rfloor+1, \\
 & \left\lfloor \frac{5(j-2)-(k-2)+\gamma}{3}\right\rfloor +3, \left\lfloor \frac{5(j-3)-(k-2)+\gamma}{3}\right\rfloor +5, \\
 & \left\lfloor \frac{5(j-4)-(k-2)+\gamma}{3}\right\rfloor+6,  \left\lfloor \frac{5(j-1)-(k-3)+\gamma}{3}\right\rfloor+1, \\
 &\left\lfloor \frac{5(j-2)-(k-3)+\gamma}{3}\right\rfloor+3,  \left\lfloor \frac{5(j-3)-(k-3)+\gamma}{3} \right\rfloor +4, \\
 &\left\lfloor \frac{5(j-1)-(k-4)+\gamma}{3} \right\rfloor +1, \left\lfloor \frac{5(j-2)-(k-4)+\gamma}{3}\right\rfloor +2, \\
 &\left\lfloor \frac{5(j-1)-(k-5)+\gamma}{3} \right\rfloor  \Big\} \\
=&\left\lfloor \frac{5j-k+\gamma}{3}\right\rfloor.
\end{align*}
Thus \eqref{suppose} holds when $i=k$. By induction it holds for any $i> l+4$. Similarly, by induction we can show that it holds for any $i<l$.
\end{proof}

\begin{lemma}\label{lem-mxyz-ord}
For $i,j \in \mathbb{Z}$, we have
\begin{align}
&\pi(m(i,j))\geq \left\lfloor \frac{5j-i}{3} \right\rfloor, \label{m-ord} \\
&\pi(x_0(i,j))\geq \left\lfloor \frac{5j-i}{3} \right\rfloor, \label{x0-ord} \\
&\pi(x_1(i,j))\geq \left\lfloor \frac{5j-i+2}{3} \right\rfloor, \label{x1-ord} \\
&\pi(y_0(i,j))\geq \left\lfloor \frac{5j-i-1}{3} \right\rfloor, \label{y0-ord} \\
&\pi(y_1(i,j))\geq \left\lfloor \frac{5j-i+1}{3} \right\rfloor, \label{y1-ord} \\
&\pi(z_0(i,j))\geq \left\lfloor \frac{5j-i-2}{3} \right\rfloor, \label{z0-ord} \\
&\pi(z_1(i,j))\geq \left\lfloor \frac{5j-i+1}{3} \right\rfloor. \label{z1-ord}
\end{align}
\end{lemma}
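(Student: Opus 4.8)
The plan is to prove all seven bounds uniformly by recognizing each coefficient family as an instance of the generic sequence in Lemma \ref{lem-common}. First I would note that $m(i,j)$, $x_0(i,j)$, $x_1(i,j)$, $y_0(i,j)$, $y_1(i,j)$, $z_0(i,j)$ and $z_1(i,j)$ are all integer-valued and all satisfy the single recurrence \eqref{recurrence}: integrality is clear from the explicit integer entries recorded in Groups I--IV of the Appendix together with the fact that every coefficient appearing in \eqref{recurrence} is an integer. Thus each family is eligible to play the role of $g(i,j)$ in Lemma \ref{lem-common}, and the only data I must supply for each family is the correct shift constant $\gamma$ and a verification of hypothesis \eqref{suppose} on one block of five consecutive values of $i$.

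Matching the exponents in \eqref{m-ord}--\eqref{z1-ord} against the template $\lfloor (5j-i+\gamma)/3 \rfloor$ of Lemma \ref{lem-common}, I would take $\gamma=0$ for $m$ and $x_0$, $\gamma=2$ for $x_1$, $\gamma=-1$ for $y_0$, $\gamma=1$ for $y_1$ and $z_1$, and $\gamma=-2$ for $z_0$. In every case the asserted bound is then literally \eqref{suppose}. I would use the block $l=-4$, so that the required range $l\le i\le l+4$ is exactly $-4\le i\le 0$, which is precisely the range for which the coefficients are tabulated explicitly in the Appendix.

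It therefore remains to carry out the base-case check: for each family and each $i$ with $-4\le i\le 0$, confirm directly from the tabulated values that $\pi(g(i,j))\ge \lfloor (5j-i+\gamma)/3\rfloor$ holds for every $j$. Since each tabulated entry is an explicit integer, this reduces to comparing its $5$-adic valuation with the right-hand floor; the many pairs $(i,j)$ with $g(i,j)=0$ impose no constraint because $\pi(0)=\infty$. Once these finitely many inequalities are confirmed, Lemma \ref{lem-common} propagates each bound to all $i\in\mathbb{Z}$ at once, the forward direction ($i>0$) being immediate from \eqref{recurrence} and the backward direction ($i<-4$) following because the coefficient of $g(i-5,j-1)$ in \eqref{recurrence} equals $1$, so that term can be isolated.

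The sole real obstacle is the bulk of the base-case verification: seven families, five values of $i$ apiece, and finitely many nonzero $j$ per pair, so the check is long but entirely mechanical. I anticipate no conceptual difficulty beyond making sure the shift $\gamma$ is paired with the correct family and that the valuations of the explicit Appendix entries clear the floor-function thresholds; all of the genuine arithmetic with the floor functions and with the $5$-adic valuations of the recurrence coefficients has already been absorbed into Lemma \ref{lem-common}.
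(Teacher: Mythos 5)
Your proposal is correct and follows the paper's own proof exactly: the paper likewise verifies the seven bounds for $-4\le i\le 0$ directly from the Appendix data (with the same shifts $\gamma$ implicit in matching each bound to the template $\lfloor(5j-i+\gamma)/3\rfloor$) and then invokes Lemma \ref{lem-common} to propagate them to all $i\in\mathbb{Z}$. Your added remarks on integrality, the vanishing entries via $\pi(0)=\infty$, and isolating the $g(i-5,j-1)$ term for the backward induction are details the paper leaves implicit, but they introduce no deviation in method.
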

\begin{proof}
We have verified these inequalities for $-4\leq i \leq 0$ using the data in the Appendix. Therefore, by Lemma \ref{lem-common} we know that they hold for any $i\in \mathbb{Z}$.
\end{proof}
\section{Proofs of the theorems}\label{sec-proof}
In order to prove Theorem \ref{thm-c(n)}, we establish explicit representations for the generating functions of the subsequences of $c(n)$ in the congruences \eqref{c(n)-cong-1} and \eqref{c(n)-cong-2}.

Let
\begin{align}
L_0=L_0(\tau):=2E_2(2\tau)-E_2(\tau).
\end{align}
For $i\geq 1$, we recursively define
\begin{align}
&L_{2i-1}(\tau):=U_5(Z(\tau)L_{2i-2}(\tau)), \\
&L_{2i}(\tau):=U_5(L_{2i-1}(\tau)).
\end{align}
For example,
\begin{align}
L_1=(q^{10};q^{10})_\infty \sum_{n=0}^\infty c(5n+3)q^{n+1}.
\end{align}
By induction it is not difficult to show that for $i\geq 1$,
\begin{align}
L_{2i-1}&=(q^{10};q^{10})_\infty \sum_{n=0}^\infty c\left(5^{2i-1}n+\frac{7\cdot 5^{2i-1}+1}{12} \right)q^{n+1}, \label{L-odd} \\
L_{2i}&=(q^2;q^2)_\infty \sum_{n=0}^\infty c\left(5^{2i}n+\frac{11\cdot 5^{2i}+1}{12} \right)q^{n+1}. \label{L-even}
\end{align}
It is possible to represent $L_i$ using the functions $F$, $\rho$ and $t$. First we define two sequences which will appear as coefficients in such representations.

We define $\{a(i,j)\}_{i,j\geq 1}$ and $\{b(i,j)\}_{i,j\geq 1}$ as \\
(i) $a(1,1)=49\cdot 5$, $a(1,2)=6\cdot 5^4$, $a(1,3)=5^6$, and $a(1,j)=0$ for $j\geq 4$; \\
(ii) $b(1,1)=-5^3$, $b(1,2)=-5^5$, $b(1,j)=0$ for $j\geq 2$; \\
(iii) For $i\geq 1$ and $k\geq 1$,
\begin{align}
a(2i,k)&=\sum_{j=1}^\infty \left(a(2i-1,j)m(j,k)+b(2i-1,j)x_0(j,k) \right), \label{a-even} \\
b(2i,k)&=\sum_{j=1}^\infty  b(2i-1,j)x_1(j,k), \label{b-even} \\
a(2i+1,k)&=\sum_{j=1}^\infty \left(a(2i,j)y_0(j,k)+b(2i,j)z_0(j,k) \right), \label{a-odd} \\
b(2i+1,k)&=\sum_{j=1}^\infty \left(a(2i,j)y_1(j,k)+b(2i,j)z_1(j,k) \right), \label{b-odd}
\end{align}

\begin{theorem}\label{thm-c(n)-gen}
For $i\geq 1$ we have
\begin{align}
L_i=F\left(\sum_{j=1}^\infty a(i,j)t^j+ \rho \sum_{j=1}^\infty b(i,j)t^j \right). \label{Li-rep}
\end{align}
\end{theorem}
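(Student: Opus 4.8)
The plan is to establish \eqref{Li-rep} by induction on $i$, using the fact that the recurrences \eqref{a-even}--\eqref{b-odd} defining $a(i,j)$ and $b(i,j)$ have been arranged precisely to track the action of $U_5$ recorded in Lemmas \ref{lem-U-Ft}--\ref{lem-U-FZat}. Two structural features make the argument run smoothly: $U_5$ is $\mathbb{C}$-linear, and for each fixed $i$ only finitely many of the coefficients $a(i,j)$, $b(i,j)$, $m(i,j)$, $x_s(i,j)$, $y_s(i,j)$, $z_s(i,j)$ are nonzero, so that all sums below are finite and interchanging $U_5$ with a summation needs no justification.

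First I would settle the base case $i=1$. Since $L_0=2E_2(2\tau)-E_2(\tau)$ and $F$ are both weight $2$ forms on $\Gamma_0(10)$, the quotient $L_0/F$ is a modular function on $\Gamma_0(10)$; because $\rho$ satisfies the quadratic $\rho^2-(1+5t)\rho+t=0$ over $\mathbb{C}(t)$ coming from Lemma \ref{lem-sigma-t}, and $t$ has degree $2$ by the cusp-order table, the pair $\{1,\rho\}$ is a $\mathbb{C}(t)$-basis of the function field. Hence one can write $L_0=F\big(\sum_j\alpha_j t^j+\rho\sum_j\beta_j t^j\big)$ for suitable coefficients. Multiplying by $Z$ and applying $U_5$ term by term, Lemmas \ref{lem-U-FZt} and \ref{lem-U-FZat} then express $L_1=U_5(ZL_0)$ in the shape \eqref{Li-rep}, and reading off the coefficients against the Appendix data for $y_s,z_s$ pins down $a(1,j)$ and $b(1,j)$. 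In practice it is cleaner to verify the stated identity for $L_1$ by matching $q$-expansions to sufficiently high order, using that a weight $2$ meromorphic form on $\Gamma_0(10)$ with prescribed pole orders at the cusps lies in a finite-dimensional space.

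Next I would carry out the inductive step, which splits according to the parity of the index. Assume \eqref{Li-rep} for $L_{2i-1}$. Since $L_{2i}=U_5(L_{2i-1})$ carries no factor of $Z$, linearity gives
\begin{align*}
L_{2i}=\sum_{j}a(2i-1,j)\,U_5(Ft^j)+\sum_{j}b(2i-1,j)\,U_5(F\rho t^j).
\end{align*}
Substituting the expansions of $U_5(Ft^j)$ and $U_5(F\rho t^j)$ from Lemmas \ref{lem-U-Ft} and \ref{lem-U-Fat} and collecting the coefficients of $Ft^k$ and of $F\rho t^k$ reproduces exactly the sums \eqref{a-even} and \eqref{b-even} defining $a(2i,k)$ and $b(2i,k)$. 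Now assume \eqref{Li-rep} for $L_{2i}$. Here $L_{2i+1}=U_5(ZL_{2i})$, so
\begin{align*}
L_{2i+1}=\sum_{j}a(2i,j)\,U_5(FZt^j)+\sum_{j}b(2i,j)\,U_5(FZ\rho t^j).
\end{align*}
Inserting the expansions from Lemmas \ref{lem-U-FZt} and \ref{lem-U-FZat} and again reading off the coefficients of $Ft^k$ and $F\rho t^k$ yields \eqref{a-odd} and \eqref{b-odd}. This closes the induction.

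I expect the base case to be the only substantive obstacle. The inductive step is essentially formal once the four $U_5$-evaluation lemmas are in hand: it is pure substitution followed by extraction of the coefficients of $Ft^k$ and $F\rho t^k$, which by construction coincide with the recurrences \eqref{a-even}--\eqref{b-odd}. The base case, by contrast, requires one genuine computation---producing the explicit $\rho,t$-expansion of $L_0/F$ (equivalently, of $ZL_0$ as a finite combination of the functions $FZt^j$ and $FZ\rho t^j$) and controlling the pole orders at the cusps of $\Gamma_0(10)$ so that a finite $q$-expansion check is conclusive---after which the initial data $a(1,j)$ and $b(1,j)$ drop out.
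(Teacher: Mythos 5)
Your proposal is correct and follows essentially the same route as the paper: the paper's proof is exactly the parity-split induction you describe, applying $U_5$ to $L_{2i-1}$ (resp.\ to $ZL_{2i}$), substituting Lemmas \ref{lem-U-Ft}--\ref{lem-U-FZat}, and matching coefficients against the recurrences \eqref{a-even}--\eqref{b-odd}. For the base case the paper simply records the identity $L_1=F\left((49\cdot 5t+6\cdot 5^4 t^2+5^6 t^3)+\rho(-5^3t-5^5t^2)\right)$, established by the same cusp-order and $q$-expansion verification you outline.
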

\begin{proof}
It is not difficult to find that
\begin{align}
L_1=F\left((49\cdot 5t+6\cdot 5^4 t^2+5^6 t^3)+\rho (-5^3t-5^5t^2)  \right).
\end{align}
Hence \eqref{Li-rep} holds for $i=1$.

Now we suppose \eqref{Li-rep} holds for $2i-1$ where $i\geq 1$. That is,
\begin{align}
L_{2i-1}=F\left(\sum_{j=1}^\infty a(2i-1,j)t^j+ \rho \sum_{j=1}^\infty b(2i-1,j)t^j \right). \label{Li-proof-1}
\end{align}
Applying $U_5$ to both sides of \eqref{Li-proof-1}, by Lemmas \ref{lem-U-Ft} and \ref{lem-U-Fat} we have
\begin{align*}
L_{2i}&=U_5(L_{2i-1})\\
&= \sum_{j=1}^\infty a(2i-1,j)U_5(Ft^j)+\sum_{j=1}^\infty b(2i-1,j)U_5(F\rho t^j) \\
&=F\sum_{j=1}^\infty a(2i-1,j)\sum_{k=1}^\infty m(j,k)t^k \\
&\quad \quad +F\sum_{j=1}^\infty b(2i-1,j)\left(\sum_{k=1}^\infty x_0(j,k)t^k+\rho \sum_{k=1}^\infty x_1(j,k)t^k \right) \\
&=F\sum_{k=1}^\infty \Big(\sum_{j=1}^\infty \big(a(2i-1,j)m(j,k)+b(2i-1,j)x_0(j,k)\big)t^k \\
&\quad \quad +\rho \sum_{j=1}^\infty b(2i-1,j)x_1(j,k)t^k  \Big).
\end{align*}
By \eqref{a-even} and \eqref{b-even}, this implies
\begin{align}
L_{2i}=F\left(\sum_{j=1}^\infty a(2i,j)t^j+ \rho \sum_{j=1}^\infty b(2i,j)t^j \right). \label{Li-proof-2}
\end{align}
Therefore, \eqref{Li-rep} holds for $2i$.

Next, multiplying \eqref{Li-proof-2} by $Z$ and then applying $U_5$ to both sides, by Lemmas \ref{lem-U-FZt} and \ref{lem-U-FZat}  we have
\begin{align*}
L_{2i+1}&=U_5(ZL_{2i})\\
&= \sum_{j=1}^\infty a(2i,j)U_5(FZt^j)+\sum_{j=1}^\infty b(2i,j)U_5(FZ\rho t^j) \\
&=F\sum_{j=1}^\infty a(2i,j)\left(\sum_{k=1}^\infty y_0(j,k)t^k+\rho \sum_{k=1}^\infty y_1(j,k)t^k\right)  \\
&\quad \quad +F\sum_{j=1}^\infty b(2i,j)\left(\sum_{k=1}^\infty z_0(j,k)t^k+\rho \sum_{k=1}^\infty z_1(j,k)t^k \right) \\
&=F\sum_{k=1}^\infty \Big(\sum_{j=1}^\infty \big(a(2i,j)y_0(j,k)+b(2i,j)z_0(j,k)\big)t^k \\
&\quad \quad +\rho \sum_{j=1}^\infty \big(a(2i,j)y_1(j,k)+b(2i,j)z_1(j,k)\big)t^k  \Big).
\end{align*}
By \eqref{a-odd} and \eqref{b-odd}, this implies that \eqref{Li-rep} holds for $2i+1$. By induction we complete the proof.
\end{proof}

In order to prove Theorem \ref{thm-c(n)}, we still need to analyze the 5-adic orders of $a(i,j)$ and $b(i,j)$.
\begin{lemma}\label{lem-ab-order}
For $i,j\geq 1$ we have
\begin{align}
&\pi(a(2i-1,j))\geq 2i-1+\left\lfloor \frac{5j-5}{3}\right\rfloor, \label{a-odd-order} \\
&\pi(a(2i,j))\geq 2i+\left\lfloor \frac{5j-4}{3}\right\rfloor, \label{a-even-order} \\
&\pi(b(2i-1,j))\geq 2i-1+\left\lfloor \frac{5j-3}{3} \right\rfloor, \label{b-odd-order} \\
&\pi(b(2i,j))\geq 2i+1+\left\lfloor \frac{5j-5}{3}\right\rfloor. \label{b-even-order}
\end{align}
\end{lemma}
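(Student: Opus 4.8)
The plan is to prove all four bounds simultaneously by induction on $i$, using the defining recursions \eqref{a-even}--\eqref{b-odd} together with the $5$-adic order estimates already established in Lemma \ref{lem-mxyz-ord}. The base case $i=1$ is handled directly from the explicit values in (i) and (ii): one checks that $a(1,1)=49\cdot 5$, $a(1,2)=6\cdot 5^4$, $a(1,3)=5^6$ satisfy $\pi(a(1,j))\geq \lfloor(5j-5)/3\rfloor$, and that $b(1,1)=-5^3$, $b(1,2)=-5^5$ satisfy $\pi(b(1,j))\geq \lfloor(5j-3)/3\rfloor$. The heart of the argument is the inductive step, where the $5$-adic order of a sum $\sum_j a(i,j)m(j,k)$ is bounded below by the minimum over $j$ of $\pi(a(i,j))+\pi(m(j,k))$, and then one must verify that each such combined exponent dominates the claimed target bound for the new index.

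\emph{Inductive step, even from odd.}
Assuming \eqref{a-odd-order} and \eqref{b-odd-order} hold for the index $2i-1$, I would bound $\pi(a(2i,k))$ via \eqref{a-even}. Each summand contributes either $\pi(a(2i-1,j))+\pi(m(j,k))$ or $\pi(b(2i-1,j))+\pi(x_0(j,k))$. Inserting the inductive hypotheses and the bounds \eqref{m-ord}, \eqref{x0-ord} from Lemma \ref{lem-mxyz-ord}, the first type gives at least
\[
(2i-1)+\left\lfloor\frac{5j-5}{3}\right\rfloor+\left\lfloor\frac{5k-j}{3}\right\rfloor,
\]
and I must show this is at least $2i+\lfloor(5k-4)/3\rfloor$. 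The analysis for $\pi(b(2i,k))$ via \eqref{b-even} uses only the $b(2i-1,\cdot)$ term together with \eqref{x1-ord}. The technical work is the manipulation of nested floor functions: one uses the superadditivity $\lfloor\alpha\rfloor+\lfloor\beta\rfloor\geq\lfloor\alpha+\beta\rfloor-1$ (or the sharper statement accounting for the residues of $5j$ and $5k$ modulo $3$) to combine the two floors and extract the needed gain of one extra power of $5$.

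\emph{Inductive step, odd from even.}
Symmetrically, assuming \eqref{a-even-order} and \eqref{b-even-order} for index $2i$, I would estimate $\pi(a(2i+1,k))$ and $\pi(b(2i+1,k))$ through \eqref{a-odd} and \eqref{b-odd}, now feeding in the bounds \eqref{y0-ord}, \eqref{y1-ord}, \eqref{z0-ord}, \eqref{z1-ord}. For instance, for $\pi(a(2i+1,k))$ via \eqref{a-odd}, the $a(2i,j)y_0(j,k)$ term yields at least $2i+\lfloor(5j-4)/3\rfloor+\lfloor(5k-j-1)/3\rfloor$ while the $b(2i,j)z_0(j,k)$ term yields at least $(2i+1)+\lfloor(5j-5)/3\rfloor+\lfloor(5k-j-2)/3\rfloor$; both must be shown to reach the target $(2i+1)+\lfloor(5j-5)/3\rfloor$ for the odd index. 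The different constants $-1,+1,-2,+1$ appearing in Lemma \ref{lem-mxyz-ord} are precisely calibrated so that each of the four recursions advances the exponent by exactly the amount recorded in the statement.

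\emph{Main obstacle.}
I expect the genuine difficulty to lie entirely in the floor-function bookkeeping: the summand indices $j$ are free, so each inequality must hold \emph{uniformly in $j$}, and the worst case occurs at a specific residue of $j\bmod 3$ where the two floors $\lfloor(5j+\cdots)/3\rfloor$ and $\lfloor(5k-j+\cdots)/3\rfloor$ lose the most to rounding. Establishing the combined bound therefore requires a careful case split on $j\bmod 3$ (equivalently on $5j\bmod 3$), after which each case reduces to an elementary linear inequality in integers. Once this floor arithmetic is organized cleanly, the induction closes and the four estimates follow for all $i,j\geq 1$.
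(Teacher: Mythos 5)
Your proposal is correct and takes essentially the same route as the paper: induction on $i$ alternating between the even-index recursions \eqref{a-even}--\eqref{b-even} and the odd-index recursions \eqref{a-odd}--\eqref{b-odd}, bounding the $5$-adic order of each sum by the minimum over $j$ of the combined orders from the inductive hypothesis and Lemma \ref{lem-mxyz-ord}, with the floor arithmetic closing the induction (the worst case is $j=1$, where equality holds, so bare superadditivity $\lfloor\alpha\rfloor+\lfloor\beta\rfloor\geq\lfloor\alpha+\beta\rfloor-1$ loses one power and the sharper residue analysis you mention is indeed required). Two typo-level slips: the base-case bounds should include the additive constant $2i-1=1$, i.e.\ $\pi(a(1,j))\geq 1+\lfloor(5j-5)/3\rfloor$ and $\pi(b(1,j))\geq 1+\lfloor(5j-3)/3\rfloor$ (the explicit values do satisfy these), and your stated target in the odd step should read $(2i+1)+\lfloor(5k-5)/3\rfloor$ with $k$, not $j$.
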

\begin{proof}
We verify \eqref{a-odd-order} and \eqref{b-odd-order} for $i=1$ directly. Now suppose \eqref{a-odd-order} and \eqref{b-odd-order} hold for $i$.

By \eqref{a-even} and Lemma \ref{lem-mxyz-ord} we have
\begin{align*}
\pi(a(2i,k)) &\geq \min\limits_{j\geq 1} \Big\{\pi(a(2i-1,j))+\pi(m(j,k)), \pi(b(2i-1,j))+\pi(x_0(j,k))  \Big\} \\
&\geq \min\limits_{j\geq 1}\left\{2i-1+\left\lfloor  \frac{5j-5}{3} \right\rfloor +\left\lfloor \frac{5k-j}{3}\right\rfloor, 2i-1+\left\lfloor \frac{5j-3}{3}\right\rfloor +\left\lfloor \frac{5k-j}{3}\right\rfloor \right\} \\
&= 2i-1+\left\lfloor \frac{5k-1}{3}\right\rfloor \\
&=2i+\left\lfloor \frac{5k-4}{3}\right\rfloor.
\end{align*}
Similarly, by \eqref{b-even} and Lemma \ref{lem-mxyz-ord} we have
\begin{align*}
\pi(b(2i,k)) &\geq \min\limits_{j\geq 1} \pi(b(2i-1,j))+\pi(x_1(j,k)) \\
&\geq \min\limits_{j\geq 1} 2i-1+\left\lfloor \frac{5j-3}{3}\right\rfloor +\left\lfloor \frac{5k-j+2}{3}\right\rfloor  \\
&= 2i-1+\left\lfloor \frac{5k+1}{3}\right\rfloor \\
&=2i+1+\left\lfloor \frac{5k-5}{3}\right\rfloor.
\end{align*}
Hence \eqref{a-even-order} and \eqref{b-even-order} holds for $i$.

Next, by \eqref{a-odd} and Lemma \ref{lem-mxyz-ord} we have
\begin{align*}
&\pi(a(2i+1,k)) \\
\geq &~ \min\limits_{j\geq 1} \left\{\pi(a(2i,j))+\pi(y_0(j,k)), \pi(b(2i,j))+\pi(z_0(j,k))  \right\} \\
\geq  &~ \min\limits_{j\geq 1}\left\{2i+\left\lfloor  \frac{5j-4}{3} \right\rfloor +\left\lfloor \frac{5k-j-1}{3}\right\rfloor, 2i+1+\left\lfloor \frac{5j-5}{3}\right\rfloor +\left\lfloor \frac{5k-j-2}{3}\right\rfloor \right\} \\
= &~ 2i+1+\left\lfloor \frac{5k-5}{3}\right\rfloor.
\end{align*}
Similarly, by \eqref{b-odd} and Lemma \ref{lem-mxyz-ord} we have
\begin{align*}
&\pi(b(2i+1,k)) \\
\geq &~ \min\limits_{j\geq 1} \left\{\pi(a(2i,j))+\pi(y_1(j,k)), \pi(b(2i,j))+\pi(z_1(j,k))  \right\} \\
\geq &~ \min\limits_{j\geq 1}\left\{2i+\left\lfloor  \frac{5j-4}{3} \right\rfloor +\left\lfloor \frac{5k-j+1}{3}\right\rfloor, 2i+1+\left\lfloor \frac{5j-5}{3}\right\rfloor +\left\lfloor \frac{5k-j+1}{3}\right\rfloor \right\} \\
= &~ 2i+1+\left\lfloor \frac{5k-3}{3}\right\rfloor.
\end{align*}
Therefore, \eqref{a-odd-order} and \eqref{b-odd-order} hold for $i+1$. By induction we complete the proof.
\end{proof}

Now we are able to prove the main theorems.
\begin{proof}[Proof of Theorem \ref{thm-c(n)}]
Congruences \eqref{c(n)-cong-1} and \eqref{c(n)-cong-2} follow from Theorem \ref{thm-c(n)-gen} and Lemma \ref{lem-ab-order}.
\end{proof}

\begin{proof}[Proof of Theorem \ref{thm-C5}]
From \cite[Corollary 2.7]{Garvan-Shaffer} we find
\begin{align}\label{SC5-zq}
S_{C5}(z,q)=\frac{(-q;q)_\infty (zq,z^{-1}q,q^2;q^2)_{\infty}-(q;q)_\infty}{(-q,z,z^{-1};q)_\infty},
\end{align}
where $S_{C5}(z,q)$ was defined in \cite[Eq.\ (2.6)]{Garvan-Shaffer} as
\begin{align}
S_{C5}(z,q)=\sum_{n=1}^\infty \frac{q^{(n^2+n)/2}(q^{2n+1};q^2)_\infty (q^{n+1};q)_\infty}{(zq^n,z^{-1}q^n;q)_\infty}.
\end{align}
Let
\begin{align}
S_{C5}(q):=\sum_{n=1}^\infty \spt_{C5}(n)q^n.
\end{align}
It follows from \eqref{spt-C5-gen} that $S_{C5}(1,q)=S_{C5}(q)$. We let
\begin{align}
F(z)=(-q;q)_\infty (zq,z^{-1}q,q^2;q^2)_\infty.
\end{align}
Taking logarithmic differentiation on both sides, we obtain
\begin{align}
\frac{F'(z)}{F(z)}=\sum_{n=0}^\infty \left(\frac{-q^{2n+1}}{1-zq^{2n+1}}+\frac{z^{-2}q^{2n+1}}{1-z^{-1}q^{2n+1}} \right). \label{F-diff}
\end{align}
We have $F(1)=(q;q)_\infty$ and $F'(1)=0$. Now we differentiate both sides of \eqref{F-diff} with respect to $z$. We get
\begin{align}
\frac{F''(z)F(z)-(F'(z))^2}{(F(z))^2}=\sum_{n=0}^\infty \frac{q^{2n+1}(-2z^{-3}+z^{-4}q^{2n+1})}{(1-z^{-1}q^{2n+1})^2}-\sum_{n=0}^\infty \frac{q^{4n+2}}{(1-zq^{2n+1})^2}. \label{F-2nd-diff}
\end{align}
Setting $z=1$ in \eqref{F-2nd-diff}, we obtain
\begin{align}
F''(1)=-2(q;q)_\infty \sum_{n=0}^\infty \frac{q^{2n+1}}{(1-q^{2n+1})^2}.
\end{align}
Therefore, by \eqref{SC5-zq} and L'Hospital's rule, we have
\begin{align}
S_{C5}(q)&=-   \lim_{z\rightarrow 1} \frac{F(z)-F(1)}{(z-1)^2}\cdot \frac{1}{(-q;q)_\infty (q;q)_\infty^2} \nonumber \\
&=-\frac{F''(1)}{2(-q,q,q;q)_\infty} \nonumber \\
&=\frac{1}{(q^2;q^2)_\infty}\sum_{n=0}^\infty \frac{q^{2n+1}}{(1-q^{2n+1})^2}. \label{SC5-rep-1}
\end{align}
Since
\begin{align}
\frac{x}{(1-x)^2}=\sum_{n=1}^\infty nx^n,
\end{align}
we have
\begin{align}
&\sum_{n=0}^\infty \frac{q^{2n+1}}{(1-q^{2n+1})^2} \nonumber\\
=&~ \sum_{n=0}^\infty \sum_{m=1}^\infty  mq^{(2n+1)m} \nonumber  \\
=&~ \sum_{m=1}^\infty \sum_{n=0}^\infty mq^{(2n+1)m} \nonumber \\
=&~ \sum_{m=1}^\infty \frac{mq^m}{1-q^{2m}}  \nonumber \\
=&~ \sum_{m=1}^\infty \frac{mq^m}{1-q^m}-\sum_{m=1}^\infty \frac{mq^{2m}}{1-q^{2m}} \nonumber \\
=&~ \frac{1}{24}\left(E_2(2\tau)-E_2(\tau) \right). \label{SC5-add-1}
\end{align}
Thus \eqref{SC5-rep-1} can be rewritten as
\begin{align}
24S_{C5}(q)=\frac{2E_2(2\tau)-E_2(\tau)}{(q^2;q^2)_\infty} -\frac{E_2(2\tau)}{(q^2;q^2)_\infty}. \label{SC5-new-rep}
\end{align}
Taking logarithmic differentiation to both sides of \eqref{p(n)-gen}, and then multiplying by $q$, we obtain
\begin{align}
\sum_{n=1}^\infty np(n)q^n=\frac{1}{(q;q)_\infty}\sum_{n=1}^\infty \frac{nq^n}{1-q^n}=\frac{1-E_2(\tau)}{24(q;q)_\infty}. \label{p(n)-diff}
\end{align}
Substituting \eqref{p(n)-diff} with $q$ replaced by $q^2$ into \eqref{SC5-new-rep}, we conclude that
\begin{align}\label{C5-relation-proved}
24\spt_{C5}(n)=c(n)+(12n-1)p(n/2).
\end{align}
Theorem \ref{thm-C5} then follows from Theorem \ref{thm-c(n)}.
\end{proof}

\begin{proof}[Proof of Theorem \ref{thm-conj}]
Although \eqref{relation} is already known in \cite{Garvan-Shaffer}, for the sake of completeness, we shall provide a new proof here.

From \eqref{SC5-add-1} and \eqref{SC5-rep-1} we can write
\begin{align}
\sum_{n=1}^\infty \spt_{C5}(n)q^n=\frac{1}{(q^2;q^2)_\infty}\sum_{n=1}^\infty \left(\frac{nq^n}{1-q^n}-\frac{nq^{2n}}{1-q^{2n}}  \right). \label{SC5-final}
\end{align}
Combining \eqref{spt-gen} with $q$ replaced by $q^2$, \eqref{spt-omega-new-gen} and \eqref{SC5-final}, we obtain the relation \eqref{relation}.

Note that $\spt_{\omega}(2n+1)=\spt_{C5}(2n+1)$, Conjecture \ref{Wang-conj} follows from \eqref{C5-cong-power-1} and \eqref{C5-cong-power-2}.

Similarly, since $\spt_{\omega}(2n)=\spt(n)+\spt_{C5}(2n)$, congruence \eqref{spt-omega-even} follows from \eqref{spt-5-power} and Theorem \ref{thm-C5}.
\end{proof}
\begin{rem}
We may also prove Conjecture \ref{Wang-conj} without refereing to the congruences of $\spt_{C5}(n)$. Let $\sigma(n)$ denote the sum of positive divisors of $n$. Note that
\begin{align}
\sigma(2n)=3\sigma(n)-2\sigma(n/2).
\end{align}
We have
\begin{align*}
E_2(\tau)&=1-24\sum_{n=1}^\infty \sigma(n)q^n\nonumber \\
&=\left(1-24\sum_{n=1}^\infty\sigma(2n)q^{2n}\right)-24\sum_{n=0}^\infty \sigma(2n+1)q^{2n+1}\nonumber \\
&=3E_2(2\tau)-2E_{2}(4\tau)-24\sum_{n=0}^\infty \sigma(2n+1)q^{2n+1}.
\end{align*}
From this we deduce the following 2-dissection formulas:
\begin{align}
&\sum_{n=0}^\infty c(2n)q^n=\frac{2E_2(2\tau)-E_2(\tau)}{(q;q)_\infty}, \label{c(n)-even}\\
&\sum_{n=0}^\infty c(2n+1)q^n=\frac{24}{(q;q)_\infty}\sum_{n=0}^\infty \sigma(2n+1)q^n. \label{c(n)-odd}
\end{align}
It follows from \eqref{c(n)-odd} and \cite[Eq.\ (4.2)]{Wang-JNT} that $c(2n+1)=24 \spt_{\omega}(2n+1)$. Hence Conjecture \ref{Wang-conj} follows from Theorem  \ref{thm-c(n)}.
\end{rem}

\subsection*{Acknowledgements}
The first author was partially supported by the National Natural Science Foundation of China (11801424), the Fundamental Research Funds for the Central Universities (Project No. 2042018kf0027) and a start-up research grant of the Wuhan University. The second author was partially supported by Grant 102-2115-M-009-001-MY4 of the Ministry of Science and Technology, Taiwan (R.O.C.).

\section{Appendix}
In this section, we present some auxiliary formulas, which can be proved by comparing the orders of vanishing at cusps and Fourier coefficients of the  involved functions. The readers may consult \cite{Gordon-Hughes} and \cite{Xiong} for idea of deductions of these formulas.

Group \uppercase\expandafter{\romannumeral1}:
\begin{align*}
&U_5(F)=F(1+4\cdot 5 t), \\
&U_5(Ft^{-1})=F(-3-3\cdot 5^2 t-5^3 t^2), \\
&U_5(Ft^{-2})=F(1+5^2t+0\cdot t^2 -5^5t^3), \\
&U_5(Ft^{-3})=F(9\cdot 5+9\cdot 5^3 t+0\cdot t^2+0\cdot t^3-5^7t^4), \\
&U_5(Ft^{-4})=F(-51\cdot 5-51\cdot 5^3t +0\cdot t^2+0\cdot t^3+0\cdot t^4-5^9 t^5).
\end{align*}

Group \uppercase\expandafter{\romannumeral2}:
\begin{align*}
&U_5(F\rho)=F(4\cdot 5 t +\rho), \\
&U_5(F\rho t^{-1})=F\Big((1-5^3t^2)+\rho (5^2t)  \Big), \\
&U_5(F\rho t^{-2})=F\Big((-7-7\cdot 5^2 t-5^4 t^2-5^5 t^3)+ \rho (5^4t^2) \Big), \\
&U_5(F\rho t^{-3})=F\Big((5^2+5^4t+0\cdot t^2-5^6 t^3-5^7t^4)+\rho (5^6t^3)   \Big), \\
&U_5(F\rho t^{-4})=F\Big((-3\cdot 5-3\cdot 5^3t+0\cdot t^2+0\cdot t^3-5^8t^4-5^9t^5) +\rho (5^8t^4)  \Big).
\end{align*}

Group \uppercase\expandafter{\romannumeral3}:
\begin{align*}
&U_5(FZ)=F(-5t), \\
&U_5(FZt^{-1})=F\Big((1+7\cdot 5t+5^3t^2)+\rho(-1-5^2t)  \Big), \\
&U_5(FZt^{-2})=F\Big((-4-5^3t-4\cdot 5^3t^2)+\rho (5+4\cdot 5^2 t-5^4 t^2) \Big), \\
&U_5(FZt^{-3})=F\Big((3+8\cdot 5^2t+6\cdot 5^4 t^2+7\cdot 5^5t^3+5^7t^4)+\rho(-2\cdot 5-5^3t+5^5t^2)  \Big), \\
&U_5(FZt^{-4})=F\Big((12\cdot 5+22\cdot 5^2 t-56\cdot 5^4 t^2-22\cdot 5^6 t^3-4\cdot 5^8 t^4-5^9t^5)\\
&\quad \quad \quad +\rho(-8\cdot 5-4\cdot 5^3+6\cdot 5^5 t^2+12\cdot 5^6t^3+2\cdot 5^8t^4) \Big).
\end{align*}

Group \uppercase\expandafter{\romannumeral4}:
\begin{align*}
&U_5(FZ\rho)=F\Big((-2\cdot 5 t)+\rho(5^2t) \Big), \\
&U_5(FZ\rho t^{-1})=F\Big((3\cdot 5t+5^3 t^2)+\rho (-5^2t)  \Big), \\
&U_5(FZ\rho t^{-2})=F\Big((1+2\cdot 5t-5^3t^2)+\rho (5^2t)  \Big), \\
&U_5(FZ\rho t^{-3})=F\Big((-8-5^3t+4\cdot 5^4t^2+8\cdot 5^5t^3+5^7t^4)+\rho(5+0\cdot t-5^5 t^2-5^6t^3) \Big), \\
&U_5(FZ\rho t^{-4})=F\Big((31+9\cdot 5^2 t-36\cdot 5^4t^2-86\cdot 5^5 t^3-3\cdot 5^8 t^4-5^9t^5)\\
&\quad \quad \quad +\rho(-7\cdot 5 +5^3t+2\cdot 5^6 t^2+12\cdot 5^6 t^3+5^8 t^4) \Big).
\end{align*}


\begin{thebibliography}{0}
\bibitem{Andrews-spt} G.E. Andrews, The number of smallest parts in the partitions of $n$, J. Reine Angew. Math. 624 (2008), 133--142.

\bibitem{ADY} G.E. Andrews, A. Dixit and A.J. Yee, Partitions associated with Ramanujan/Watson mock theta functions $\omega(q)$, $\nu(q)$ and $\phi(q)$, Res. Number Theory (2015), 1--19.


\bibitem{Chen} W.Y.C. Chen, The spt-function of Andrews, Surveys in Combinatorics 2017, pp.\ 141-203, Cambridge University Press (2017).

\bibitem{Garvan-TAMS} F. Garvan, Congruences for Andrews' spt-function modulo powers of 5, 7 and 13, Trans. Am. Math. Soc. 364 (2012),  4847--4873.

\bibitem{Garvan-Shaffer} F. Garvan and C. Jennings-Shaffer, Exotic Bailey-Slater SPT-functions II: Hecke-Rogers-type double sums and Bailey pairs
from groups A, C, E, Adv. Math. 299 (2016), 605--639.


\bibitem{Gordon-Hughes} B. Gordon and K. Hughes, Ramanujan congruences for $q(n)$, Analytic Number Theory, Lecture Notes in Math. 899, 333--359 (1984).

\bibitem{Paule-Radu} P. Paule and C.-S. Radu, The Andrews-Sellers family of partition congruences, Adv. Math. 230 (2012), 819--838.


\bibitem{Slater} L.J. Slater, A new proof of Rogers's transformations of infinite series, Proc. Lond. Math. Soc. (2) 53 (1951), 460--475.

\bibitem{Xiong} X.H. Xiong, The number of cubic partitions modulo powers of 5, Sci. Sin. Math., 41 (2011), 1--15. arXiv 1004.4737v2.

\bibitem{Wang-JNT} L. Wang, New congruences for partitions related to mock theta functions, J. Number Theory 175 (2017), 51--65.

\end{thebibliography}
\end{document}